\documentclass[12pt]{article}
\usepackage[a4paper,margin=1in]{geometry}
\usepackage{amsmath,amssymb,amsthm,mathtools}
\usepackage{graphicx}
\usepackage{tikz,cite}
\usepackage{float}
\usepackage{booktabs}
\usepackage{array}
\usepackage{enumitem}
\usepackage{hyperref}
\hypersetup{colorlinks=true,linkcolor=blue,citecolor=blue,urlcolor=blue}
\renewenvironment{proof}{{\bf \noindent Proof.}}{\qed}
\everymath{\displaystyle}

\newtheorem{theorem}{Theorem}[section]
\newtheorem{lemma}[theorem]{Lemma}
\newtheorem{proposition}[theorem]{Proposition}
\newtheorem{corollary}[theorem]{Corollary}
\newtheorem{remark}[theorem]{Remark}
\newtheorem{example}[theorem]{Example}

\title{Metric Bases of Barycentric and Matching Subdivisions of Zero-Divisor Graphs}
\author{Vidya S$^{1}$, Prasanna Poojary$^{2,}\footnote{Corresponding author (poojary.prasanna@manipal.edu)}$, Bilal Ahmad Rather$^3$\\
Vadiraja Bhatta G R$^4$\\
\footnotesize $^{1,2,4}${\it Manipal Institute of Technology, Manipal Academy of Higher Education, Manipal, India;}\\
\footnotesize $^{3}${\it School of Mathematics and Statistics, Shandong University of Technology, Zibo 255049, China}\\
\footnotesize$^{3}${\it Centre for Research Impact \& Outcome, Chitkara University Institute of Engineering and Technology}\\
\footnotesize{\it Chitkara University, Rajpura, 140401, Punjab, India.}	\\
\texttt{vidyas1444@gmail.com; poojary.prasanna@manipal.edu;} \\ \texttt{bilalahmadrr@gmail.com; vadiraja.bhatta@manipal.edu}
}
\date{}

\begin{document}
\maketitle

\begin{abstract}
\noindent
In this paper, we study metric bases and related metric properties for barycentric and partial matching subdivisions of the zero-divisor graph of $\mathbb Z_{pq}$, where $p$ and $q$ are distinct odd primes with $q>p$. We first recall the natural partition of the zero-divisor graph into the two prime classes and then give a detailed characterization of those subsets of $BS(\Gamma(\mathbb Z_{pq}))$ that form metric bases when $q\geq 2p-1$. The proof is expanded by separating the role of closed neighborhoods, rows of subdivision vertices, and forbidden twin configurations. We then investigate $M$-subdivision graphs obtained by subdividing selected edges of $\Gamma(\mathbb Z_{pq})$. In addition to the lower bounds for subdivisions of $p-3$ and $p-2$ edges, we prove an exact formula for matching subdivisions of arbitrary size $r$, $0\leq r\leq p-2$, namely $\dim(G_r)=p+q-r-4$. Several consequences are included to illustrate how a small matching subdivision can reduce the localization cost of the original zero-divisor network.
\end{abstract}
{\bf Keywords:} Zero-divisor graph; commutative ring; metric dimension; resolving set; barycentric subdivision; matching subdivision.\par\vspace{2mm}

\noindent {\bf 2020 Mathematics Subject Classification:} 05C12, 05C25, 05C76, 13M05.
\baselineskip=0.20in

\section{Introduction}

Let $R$ be a finite commutative ring with unity. The set of zero divisors of $R$ is denoted by $Z(R)$, and $Z(R)^{*}=Z(R)\setminus\{0\}$ denotes the set of non-zero zero divisors. For a positive integer $n$, the ring of integers modulo $n$ is denoted by $\mathbb Z_n$. Beck \cite{beck} initiated the study of zero-divisor graphs of commutative rings, with motivation coming from colorability and the interaction between graph-theoretic and algebraic structure. Anderson and Livingston \cite{anderson} then introduced the now-standard version of the zero-divisor graph. In this paper we follow their convention: the graph $\Gamma(R)$ has vertex set $Z(R)^{*}$, and two distinct vertices $a$ and $b$ are adjacent whenever $ab=0$ in $R$.

For graph-theoretic terminology and algebraic notation not explicitly defined here, we refer to the standard references \cite{gallian}. All graphs considered in the paper are finite, simple, connected, and undirected.

Let $G=(V,E)$ be a connected graph, and let $W=\{w_1,w_2,\ldots,w_k\}$ be an ordered subset of $V(G)$. The metric representation, or metric code, of $v\in V(G)$ with respect to $W$ is
$$
\delta(v\mid W)=(d(v,w_1),d(v,w_2),\ldots,d(v,w_k)).
$$
The set $W$ is called a resolving set if distinct vertices of $G$ have distinct metric representations with respect to $W$. The metric dimension of $G$, denoted by $\dim(G)$, is the minimum cardinality of a resolving set of $G$ \cite{melter,slater}. A resolving set of cardinality $\dim(G)$ is called a metric basis.

Pirzada and Raja \cite{pirzada} initiated the systematic study of the metric dimension of zero-divisor graphs. Later, Pirzada and Imran \cite{pirzada2} studied compressed zero-divisor graphs, and Nithya and Prisci \cite{nitya} considered extended zero-divisor graphs. Several other variants and related metric parameters were subsequently investigated, see \cite{reza,aijaz,ou,ali,bilal} and the references therein.

Throughout the paper, $p$ and $q$ are distinct odd primes with $q>p$. For $n=pq$, the non-zero zero divisors of $\mathbb Z_{pq}$ split into two natural classes:
$$
S_p=\{p_i:1\leq i\leq q-1\},\qquad S_q=\{q_j:1\leq j\leq p-1\}.
$$
Here $S_p$ represents the non-zero multiples of $p$, while $S_q$ represents the non-zero multiples of $q$. The zero-divisor graph $\Gamma(\mathbb Z_{pq})$ is isomorphic to the complete bipartite graph $K_{p-1,q-1}$, with bipartition $S_q\cup S_p$.

The barycentric subdivision of $\Gamma(\mathbb Z_{pq})$ is denoted by $BS(\Gamma(\mathbb Z_{pq}))$. For each edge $q_jp_i$ of $\Gamma(\mathbb Z_{pq})$, let $u_i^j$ be the inserted subdivision vertex. Thus
$$
U^j=\{u_i^j:1\leq i\leq q-1\},\qquad U=\bigcup_{j=1}^{p-1}U^j,
$$
and
$$
V(BS(\Gamma(\mathbb Z_{pq})))=S_q\cup S_p\cup U.
$$
This notation will be used throughout the paper. We also use the closed neighborhood $N[x]=N(x)\cup\{x\}$.

\begin{figure}[H]
	\centering
	\resizebox{0.45\textwidth}{!}{
		\begin{tikzpicture}[scale=1,every node/.style={font=\scriptsize}]
			\node[circle,draw,fill=blue!20] (q1) at (-5,3) {$q_1$};
			\node[circle,draw,fill=blue!20] (q2) at (-5,-3) {$q_2$};
			\node[circle,draw,fill=blue!20] (q3) at (5,3) {$q_3$};
			\node[circle,draw,fill=blue!20] (q4) at (5,-3) {$q_4$};
			
			\foreach \i/\y in {1/4.5,2/3.5,3/2.5,4/1.5,5/0.5,6/-0.5,7/-1.5,8/-2.5,9/-3.5,10/-4.5}{
				\node[circle,draw,fill=red!30] (p\i) at (0,\y) {$p_{\i}$};
			}
			
			\foreach \j/\y in {1/4.3,2/3.7,3/3.1,4/2.5,5/1.9,6/1.3,7/0.7,8/0.1,9/-0.5,10/-1.1}{
				\draw[thick] (q1) -- (-3.2,\y) -- (p\j);
			}
			\foreach \j/\y in {1/1.1,2/0.5,3/-0.1,4/-0.7,5/-1.3,6/-1.9,7/-2.5,8/-3.1,9/-3.7,10/-4.3}{
				\draw[thick] (q2) -- (-1.8,\y) -- (p\j);
			}
			\foreach \j/\y in {1/4.3,2/3.7,3/3.1,4/2.5,5/1.9,6/1.3,7/0.7,8/0.1,9/-0.5,10/-1.1}{
				\draw[thick] (q3) -- (1.8,\y) -- (p\j);
			}
			\foreach \j/\y in {1/1.1,2/0.5,3/-0.1,4/-0.7,5/-1.3,6/-1.9,7/-2.5,8/-3.1,9/-3.7,10/-4.3}{
				\draw[thick] (q4) -- (3.2,\y) -- (p\j);
			}
			
			\foreach \j/\y in {1/4.3,2/3.7,3/3.1,4/2.5,5/1.9,6/1.3,7/0.7,8/0.1,9/-0.5,10/-1.1}{
				\node[circle,draw,fill=yellow!35,inner sep=1.5pt] (m1\j) at (-3.2,\y) {};
			}
			\foreach \j/\y in {1/1.1,2/0.5,3/-0.1,4/-0.7,5/-1.3,6/-1.9,7/-2.5,8/-3.1,9/-3.7,10/-4.3}{
				\node[circle,draw,fill=yellow!35,inner sep=1.5pt] (m2\j) at (-1.8,\y) {};
			}
			\foreach \j/\y in {1/4.3,2/3.7,3/3.1,4/2.5,5/1.9,6/1.3,7/0.7,8/0.1,9/-0.5,10/-1.1}{
				\node[circle,draw,fill=yellow!35,inner sep=1.5pt] (m3\j) at (1.8,\y) {};
			}
			\foreach \j/\y in {1/1.1,2/0.5,3/-0.1,4/-0.7,5/-1.3,6/-1.9,7/-2.5,8/-3.1,9/-3.7,10/-4.3}{
				\node[circle,draw,fill=yellow!35,inner sep=1.5pt] (m4\j) at (3.2,\y) {};
			}
		\end{tikzpicture}
	}
	\caption{Structure of $BS(\Gamma(\mathbb Z_{55}))$.}
	\label{fig:bs-z55}
\end{figure}

\noindent The next observation records the distance table that will be used repeatedly to compare metric representations.
\begin{remark}\label{remark1}
For $1\leq i,j\leq q-1$ and $1\leq m,n\leq p-1$, the following distance and neighborhood relations hold in $BS(\Gamma(\mathbb Z_{pq}))$:
$$
N[p_i]=\{p_i\}\cup\{u_i^j:1\leq j\leq p-1\},\qquad
N[q_j]=\{q_j\}\cup\{u_i^j:1\leq i\leq q-1\},
$$
$$
N[u_i^j]=\{p_i,u_i^j,q_j\}.
$$
Moreover,
$$
d(p_i,p_k)=4\quad (i\neq k),\qquad d(q_j,q_m)=4\quad (j\neq m),\qquad d(p_i,q_j)=2,
$$
$$
d(p_i,u_k^j)=
\begin{cases}
1, & i=k,\\
3, & i\neq k,
\end{cases}
\qquad
 d(q_j,u_i^m)=
\begin{cases}
1, & j=m,\\
3, & j\neq m,
\end{cases}
$$
and
$$
d(u_i^j,u_k^m)=
\begin{cases}
2, & i=k,\; j\neq m,\\
2, & i\neq k,\; j=m,\\
4, & i\neq k,\; j\neq m.
\end{cases}
$$
\end{remark}

\noindent The following remark explains why vertices in $S_p$ are hard to distinguish unless a landmark lies in one of their closed neighborhoods.
\begin{remark}\label{remar2}
Let $1\leq i\neq j\leq q-1$. If $x\in V(BS(\Gamma(\mathbb Z_{pq})))\setminus\{p_i,p_j\}$ satisfies $d(p_i,x)\neq 1$ and $d(p_j,x)\neq 1$, then $d(p_i,x)=d(p_j,x)$. Hence two vertices of $S_p$ can be separated only by a landmark lying in exactly one of their closed neighborhoods.
\end{remark}

\noindent We recall the known metric-dimension values for the full barycentric subdivision, since they give the cardinality target for the basis characterization below.
\begin{proposition}\rm \cite{77}\label{ET}
Let $p$ and $q$ be two distinct primes with $q>p$, and let $R=\mathbb Z_{pq}$. Then:
\begin{enumerate}[label=$\mathrm{(\arabic*)}$]
\item if $q\geq 2p-1$, then $\dim(BS(\Gamma(R)))=q-2$;
\item if $q=2p-3$, then $\dim(BS(\Gamma(R)))=q-1$;
\item if $p+1<q<2p-3$, then $\dim(BS(\Gamma(R)))>q-2$.
\end{enumerate}
\end{proposition}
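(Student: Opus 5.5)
The plan is to prove the lower bound $\dim(BS(\Gamma(R)))\geq q-2$ for all $q>p$ first, then give an explicit resolving set of size $q-2$ when $q\geq 2p-1$. After that, a finer count of how an optimal set must look will give parts (2) and (3). Write $m=p-1$ and $n=q-1$. We view $U$ as an $m\times n$ grid: row $j$ is $U^j$, and the $i$-th column is $\{u_i^j:1\le j\le m\}$. All distances are read off Remark~\ref{remark1}.

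\emph{Lower bound.} Let $W$ be resolving. By Remark~\ref{remar2}, two vertices $p_i,p_k$ are separated only by a landmark in exactly one of $N[p_i]$, $N[p_k]$. Since $N[p_i]$ consists of $p_i$ together with the $i$-th column, at most one index $i$ can have $N[p_i]\cap W=\emptyset$. The sets $N[p_i]$ are pairwise disjoint, so $|W|\geq n-1=q-2$. The same argument with $N[q_j]=\{q_j\}\cup U^j$ shows that at most one row $U^j\cup\{q_j\}$ avoids $W$.

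\emph{Upper bound for $q\geq 2p-1$.} Here $n-1=q-2\geq 2m-1$. Choose $W$ to consist of exactly one vertex $u_i^{\sigma(i)}$ from each column $i=1,\dots,n-1$, leaving column $n$ empty. Choose the row assignment $\sigma$ so that every row receives at least two landmarks, except possibly one row that receives exactly one. This is possible precisely because $n-1\geq 2m-1$. We then check that all codes are distinct, class by class.
\begin{itemize}[label=$\cdot$]
\item The vertex $p_i$ has a unique coordinate $1$ (at its column's landmark) for $i<n$. The vertex $p_n$ has all coordinates $3$.
\item The vertex $q_j$ has coordinate $1$ exactly at the landmarks in row $j$, and $3$ elsewhere.
\item The vertex $u_i^j$ has coordinates in $\{0,2,4\}$. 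The coordinate is $2$ exactly at landmarks sharing its row or its column.
\end{itemize}
Parity of the entries separates the $u$'s from the $p$'s and $q$'s. Among the $p$'s and $q$'s, the positions of the $1$'s separate them, because each $q_j$ has $1$'s along a whole row, while a $p_i$ has at most one $1$. Vertices $u_i^j$ and $u_k^l$ with $(i,j)\neq(k,l)$ have ``$2$-patterns'' equal to the union of a row-set and a column-landmark. The condition that rows carry two landmarks, with at most one exception, lets us recover the row from this pattern, and then the column. This last verification is the main case analysis and the obstacle in the construction. The delicate subcases involve the empty column $n$ and the row with a single landmark.

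\emph{Parts (2) and (3).} Suppose $|W|=q-2$. Then equality holds throughout the lower-bound argument. So $W$ has exactly one vertex in each of $n-1$ of the sets $N[p_i]$, and none in $S_q$ or in the remaining column. The key step is to show that in such a $W$ at most one row contains $\le 1$ landmark. The idea is that two rows $j,l$ each holding at most one landmark produce a twin pair: compare $u_b^j$ and $u_b^l$ for a suitable column $b$, or compare $q_j$ with $q_l$, using the distance table. Granting this, $n-1\geq 2(m-1)+1$, i.e. $q\geq 2p-1$. Hence for $q<2p-1$ we get $\dim>q-2$, which is (3). For $q=2p-3$ this gives $\dim\geq q-1$, and the matching upper bound comes from the same construction with one extra landmark added. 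I expect this twin argument to be the real obstacle: one must handle the case where the deficient rows meet the unique landmark-free column.
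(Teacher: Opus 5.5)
Your overall route (disjoint closed neighborhoods $N[p_i]$, then counting landmarks per row) matches the machinery of Section~\ref{sec-2}. The lower bound $\dim\geq q-2$ and the reduction for $|W|=q-2$ are correct: such a $W$ has one landmark in each of $q-2$ of the sets $N[p_i]$ and none in $S_q$.

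The upper-bound construction in part (1), however, fails in exactly the case you allow: a row carrying a single landmark while $W\subseteq U$. This case is forced when $q=2p-1$, since then $n-1=2m-1<2m$. Suppose row $j_0$ contains only $u_s^{j_0}$. Then $q_{j_0}$ and $p_s$ have the same code. Both are at distance $1$ from $u_s^{j_0}$. Every other landmark $u_a^t$ has $a\neq s$ (one landmark per column) and $t\neq j_0$, so it is at distance $3$ from both. Your rule ``$q_j$ has $1$'s along a whole row, $p_i$ has at most one $1$'' says nothing when the row has only one landmark. For example, with $p=3$, $q=5$ and $W=\{u_1^1,u_2^1,u_3^2\}$ you get $\delta(q_2\mid W)=\delta(p_3\mid W)=(3,3,1)$. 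In fact, by Lemma~\ref{lemma-sp-empty-row}, no subset of $U$ of size $q-2$ resolves the graph when $q=2p-1$, since some row then has at most one landmark. Your analysis is missing condition (3) of Theorem~\ref{thm-barycentric-basis}.

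The repair is to spend the odd landmark on $S_p$: replace $u_s^{j_0}$ by $p_s$ and leave row $j_0$ empty. The landmark $p_s$ is at distance $2$ from every $q_j$, $0$ or $4$ from every $p_i$, and $1$ or $3$ from every subdivision vertex. So it separates the three classes, and inside each class your pattern argument goes through; the $p_s$-coordinate separates $u_s^j$ from $u_n^j$. For $q\geq 2p+1$, simply require at least two landmarks in every row.

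For parts (2) and (3), the key step is only announced, and the twin pairs you name are not the right ones. A same-column pair $u_b^j,u_b^l$ is separated by any landmark of row $j$ or row $l$ outside column $b$. If a row's only landmark lies in column $b$, then one of the two vertices is itself a landmark. So such a pair is unresolved only when both rows are empty. What works are crossed pairs, as in Lemma~\ref{lemmaD}:
\begin{itemize}
\item If rows $j,l$ contain only $u_k^j$ and $u_h^l$ (necessarily $k\neq h$), then $u_h^j$ and $u_k^l$ are both at distance $2$ from these two landmarks, $3$ from every landmark in $S_p$, and $4$ from every other subdivision landmark.
\item If row $j$ is empty and row $l$ contains only $u_r^l$, then $u_r^j$ and $u_i^l$, with column $i$ the landmark-free one, are both at distance $2$ from $u_r^l$ and agree on all other landmarks by the same count.
\item If both rows are empty, use $q_j,q_l$.
\end{itemize}

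Two further points need fixing.
\begin{itemize}
\item Knowing there is ``at most one row with at most one landmark'' yields only $q-2\geq 2(p-2)$. Your ``$+1$'' needs the oddness of $q$, or the observation that an $S_p$-free basis needs two landmarks in every row.
\item The upper bound in (2) cannot come from ``the same construction plus one landmark'', because that construction needs $n-1\geq 2m-1$, while $q=2p-3$ gives $n=2m-2$. Instead, put one subdivision landmark in every column, two in each of $m-1$ rows and none in row $j_0$. Then every $p_i$ has exactly one coordinate $1$, so the all-$3$ code of $q_{j_0}$ is unique, and the remaining checks are as before.
\end{itemize}
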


The next sections refine the first part of Proposition \ref{ET} by describing exactly which subsets of cardinality $q-2$ are metric bases. We then compare this barycentric model with partial matching subdivisions of $\Gamma(\mathbb Z_{pq})$.

\medskip
The article is organized as: Section \ref{sec-2} gives the characterization of metric bases in $BS(\Gamma(\mathbb Z_{pq}))$ and related results.  Section \ref{sec-m-subdivision} gives the metric dimension of $M$-subdivisions of $\Gamma(\mathbb Z_{pq})$. Section \ref{section 4} gives the structure implications of $\Gamma(\mathbb Z_{pq})$ and its applications. We end up article with conclusion in Section \ref{sect con}.

\section{Characterization of metric bases in $BS(\Gamma(\mathbb Z_{pq}))$}\label{sec-2}

Throughout this section, assume $q\geq 2p-1$. The vertex partition
$$
V(BS(\Gamma(\mathbb Z_{pq})))=S_q\cup S_p\cup U
$$
is fixed.

\noindent The first counting step shows that a set of size $q-2$ must miss at least one closed neighborhood based at a vertex of $S_p$.
\begin{lemma}\rm \cite{fault}\label{lemma1}
Let $W_b\subseteq V(BS(\Gamma(\mathbb Z_{pq})))$ with $|W_b|=q-2$. Then there exists at least one vertex $p_i\in S_p$ such that $N[p_i]\cap W_b=\emptyset$.
\end{lemma}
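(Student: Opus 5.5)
This is a pigeonhole count. The closed neighborhoods $N[p_i]$, $1\le i\le q-1$, are pairwise disjoint. By Remark~\ref{remark1}, $N[p_i]=\{p_i\}\cup\{u_i^j:1\le j\le p-1\}$. For $i\ne k$ we have $p_i\ne p_k$, and $u_i^j\ne u_k^m$ because the lower index records the $S_p$-endpoint of the subdivided edge. Also $p_i\notin N[p_k]$, since $d(p_i,p_k)=4$. So the family $\{N[p_i]\}_{i=1}^{q-1}$ consists of $q-1$ pairwise disjoint subsets of $V(BS(\Gamma(\mathbb Z_{pq})))$.

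Each vertex $w\in W_b$ lies in at most one $N[p_i]$, so $W_b$ meets at most $|W_b|=q-2$ of these neighborhoods. Vertices of $S_q$ lie in none of them, since $d(q_j,p_i)=2$; this only lowers the number of neighborhoods met. Since there are $q-1>q-2$ neighborhoods, at least one index $i$ satisfies $N[p_i]\cap W_b=\emptyset$, which is the claim.

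The only point that needs checking is the disjointness. It follows from the labelling $u_i^j$ of the subdivision vertex on edge $q_jp_i$, since each $u_i^j$ has exactly one neighbor in $S_p$, namely $p_i$. No hypothesis on $q\ge 2p-1$ is needed, so the argument holds for all distinct odd primes $q>p$.
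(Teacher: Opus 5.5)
Your proof is correct and follows the paper's argument: both observe that the $q-1$ closed neighborhoods $N[p_i]=\{p_i\}\cup\{u_i^j:1\le j\le p-1\}$ are pairwise disjoint and apply the pigeonhole principle to the $q-2$ vertices of $W_b$. Your added checks, that each $u_i^j$ has the unique $S_p$-neighbor $p_i$, that $S_q$-vertices lie in no $N[p_i]$, and that the hypothesis $q\ge 2p-1$ is not used, are accurate and only make the disjointness step more explicit.
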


\begin{proof}
The family $\{N[p_i]:1\leq i\leq q-1\}$ is pairwise disjoint inside $BS(\Gamma(\mathbb Z_{pq}))$, because $N[p_i]=\{p_i\}\cup\{u_i^j:1\leq j\leq p-1\}$ and the index $i$ is fixed throughout this set. Since there are $q-1$ such pairwise disjoint closed neighborhoods but only $q-2$ vertices in $W_b$, the pigeonhole principle gives an index $i$ for which $N[p_i]\cap W_b=\emptyset$.
\end{proof}

\medskip
\noindent The next lemma shows that each non-missed closed neighborhood must receive exactly one landmark; otherwise two vertices of $S_p$ remain indistinguishable.
\begin{lemma}\label{lemma-mid-set}
Let $W_b\subseteq V(BS(\Gamma(\mathbb Z_{pq})))$ with $|W_b|=q-2$, and let $p_i\in S_p$ satisfy $N[p_i]\cap W_b=\emptyset$. If there exists $p_k\in S_p\setminus\{p_i\}$ such that $|N[p_k]\cap W_b|\neq 1$, then there exist distinct vertices $p_a,p_b\in S_p\setminus W_b$ such that $\delta(p_a\mid W_b)=\delta(p_b\mid W_b)$.
\end{lemma}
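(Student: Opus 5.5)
The plan is to combine the disjointness of the closed neighborhoods $N[p_1],\ldots,N[p_{q-1}]$, already used in Lemma \ref{lemma1}, with the separation criterion of Remark \ref{remar2}. The key observation is the following. Suppose two distinct indices $a,b$ satisfy $N[p_a]\cap W_b=N[p_b]\cap W_b=\emptyset$. Then $p_a,p_b\notin W_b$. Moreover, every landmark $x\in W_b$ is different from $p_a$ and $p_b$ and lies at distance $\neq 1$ from both. Remark \ref{remar2} then gives $d(p_a,x)=d(p_b,x)$ for every $x\in W_b$, hence $\delta(p_a\mid W_b)=\delta(p_b\mid W_b)$. So it suffices to produce a second index, besides $i$, whose closed neighborhood misses $W_b$.

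I would split according to the value of $|N[p_k]\cap W_b|$, which by hypothesis is either $0$ or at least $2$.

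\emph{Case $|N[p_k]\cap W_b|=0$.} Take $a=i$ and $b=k$. The observation above finishes the proof immediately.

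\emph{Case $|N[p_k]\cap W_b|\geq 2$.} The sets $N[p_l]$ for $l\neq i,k$ form $q-3$ pairwise disjoint sets. By disjointness they contain at most $|W_b|-2=q-4$ landmarks in total. The pigeonhole principle, exactly as in Lemma \ref{lemma1}, then yields an index $l\notin\{i,k\}$ with $N[p_l]\cap W_b=\emptyset$. Taking $a=i$ and $b=l$ and applying the observation completes this case.

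The only step needing care is checking that Remark \ref{remar2} applies to every landmark, including those in $S_q$ and in rows $U$ not indexed by $a$ or $b$. This follows from the distance table in Remark \ref{remark1}:
\begin{itemize}
\item $d(p_a,q_j)=d(p_b,q_j)=2$ for every $q_j\in S_q$;
\item $d(p_a,u_m^j)=d(p_b,u_m^j)=3$ whenever $m\notin\{a,b\}$;
\item $d(p_a,p_m)=d(p_b,p_m)=4$ whenever $m\notin\{a,b\}$.
\end{itemize}
Hence there is no real obstacle beyond the bookkeeping in the second case.
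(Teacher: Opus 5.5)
Your proof is correct and follows essentially the same route as the paper. In the case $|N[p_k]\cap W_b|=0$ you pair $p_i$ with $p_k$ via Remark~\ref{remar2}, and in the case $|N[p_k]\cap W_b|\geq 2$ you use disjointness of the closed neighborhoods and pigeonhole to find an index $l\notin\{i,k\}$ with $N[p_l]\cap W_b=\emptyset$. Your explicit check of Remark~\ref{remar2} against the distance table, covering landmarks in $S_q$, in $S_p$, and subdivision vertices $u_m^j$ with $m\notin\{a,b\}$, fills in a step the paper leaves implicit.
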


\begin{proof}
By Lemma \ref{lemma1}, at least one such $p_i$ exists. If $|N[p_k]\cap W_b|=0$ for some $k\neq i$, then no landmark of $W_b$ is at distance $1$ from either $p_i$ or $p_k$. By Remark \ref{remar2}, every landmark has the same distance from $p_i$ and $p_k$. Therefore $\delta(p_i\mid W_b)=\delta(p_k\mid W_b)$.

Now suppose $|N[p_k]\cap W_b|>1$ for some $k\neq i$. Since the sets $N[p_s]$ are pairwise disjoint and $|W_b|=q-2$, placing more than one landmark in $N[p_k]$ forces at least two of the $q-1$ sets $N[p_s]$ to receive no landmark. Thus there is $p_j\in S_p\setminus\{p_i,p_k\}$ with $N[p_j]\cap W_b=\emptyset$. Applying the preceding paragraph to $p_i$ and $p_j$ gives $\delta(p_i\mid W_b)=\delta(p_j\mid W_b)$.
\end{proof}

\medskip
\noindent This lemma identifies the rigid shape forced by the one-landmark-per-neighborhood condition.
\begin{lemma}\label{lemma-IND}
Let $W_b\subseteq V(BS(\Gamma(\mathbb Z_{pq})))$ have $q-2$ vertices. Suppose $N[p_i]\cap W_b=\emptyset$ for one $p_i\in S_p$, and $|N[p_x]\cap W_b|=1$ for every $p_x\in S_p\setminus\{p_i\}$. Then $W_b$ is independent and $W_b\cap S_q=\emptyset$.
\end{lemma}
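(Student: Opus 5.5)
The plan is a pure counting argument based on the fact that the closed neighborhoods $N[p_x]$, $1\leq x\leq q-1$, are pairwise disjoint (as noted in the proof of Lemma \ref{lemma1}) and together cover exactly $S_p\cup U$. Indeed, by Remark \ref{remark1}, each $N[p_x]=\{p_x\}\cup\{u_x^j:1\leq j\leq p-1\}$. Every $p_x$ lies in its own neighborhood, and every subdivision vertex $u_x^j$ lies in $N[p_x]$ and in no other $N[p_y]$. Hence
$$
S_p\cup U=\bigcup_{x=1}^{q-1}N[p_x]
$$
is a partition, and no vertex of $S_q$ belongs to any $N[p_x]$.

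First I will show $W_b\cap S_q=\emptyset$. By hypothesis, $N[p_i]\cap W_b=\emptyset$ and $|N[p_x]\cap W_b|=1$ for each of the $q-2$ indices $x\neq i$. Since the sets are disjoint, this gives
$$
\Bigl|W_b\cap (S_p\cup U)\Bigr|=\sum_{x\neq i}|N[p_x]\cap W_b|=q-2=|W_b|.
$$
Hence $W_b\subseteq S_p\cup U$, and so $W_b\cap S_q=\emptyset$.

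Next I will show independence. In $BS(\Gamma(\mathbb Z_{pq}))$ every edge has the form $p_xu_x^j$ or $q_ju_x^j$, so there are no edges inside $S_p$, inside $U$, or inside $S_q$. Since $W_b$ contains no vertex of $S_q$, any edge inside $W_b$ would have to be of the form $p_xu_x^j$. Both endpoints of such an edge lie in the same set $N[p_x]$, so this would give $|N[p_x]\cap W_b|\geq 2$, contradicting the hypothesis. Therefore $W_b$ is independent.

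I do not expect a genuine obstacle. The only point needing care is the claim that the $N[p_x]$ exactly partition $S_p\cup U$. This is what makes the counting equality force every landmark into $\bigcup_{x\neq i}N[p_x]$, and it is read off directly from the neighborhood descriptions in Remark \ref{remark1}.
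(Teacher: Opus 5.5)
Your proposal is correct and follows the paper's own route. The paper phrases the first step by contradiction: a landmark in $S_q$ covers no $N[p_x]$, so the $q-2$ neighborhoods $N[p_x]$ with $x\neq i$ could not each receive one landmark. That is your counting identity $|W_b\cap(S_p\cup U)|=q-2=|W_b|$ stated directly, and independence is then obtained exactly as you do, since any edge $p_ku_k^j$ inside $W_b$ would put two landmarks in $N[p_k]$.
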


\begin{proof}
First assume that $W_b\cap S_q\neq\emptyset$. A vertex of $S_q$ is not contained in any of the closed neighborhoods $N[p_x]$. Hence it uses one of the $q-2$ available positions without covering any $N[p_x]$. Since $q-2$ landmarks must cover the $q-2$ neighborhoods $N[p_x]$ with $x\neq i$ exactly once, this is impossible. Equivalently, one additional $N[p_k]$ would be empty, contradicting the assumed equality $|N[p_k]\cap W_b|=1$.

Next assume that $W_b$ is dependent. Since no vertex of $S_q$ lies in $W_b$, dependence can only occur if two selected vertices are adjacent inside the subdivision graph. Such an adjacent pair has the form $p_k u_k^j$ for some $k$ and $j$, because the induced graph on $S_p\cup U$ contains only the edges between $p_k$ and the vertices $u_k^j$. But then both $p_k$ and $u_k^j$ lie in $N[p_k]\cap W_b$, so $|N[p_k]\cap W_b|\geq 2$, again contradicting the hypothesis. Thus $W_b$ is independent and $W_b\cap S_q=\emptyset$.
\end{proof}

\medskip
\noindent The following consequence is immediate from Lemmas \ref{lemma-mid-set} and \ref{lemma-IND}.
\begin{corollary}\label{lemma-ind-and-K}
Let $W_b\subseteq V(BS(\Gamma(\mathbb Z_{pq})))$ with $|W_b|=q-2$. If $W_b$ is dependent or $W_b\cap S_q\neq\emptyset$, then there exist distinct $p_a,p_b\in S_p\setminus W_b$ such that $\delta(p_a\mid W_b)=\delta(p_b\mid W_b)$.
\end{corollary}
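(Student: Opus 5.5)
We argue by contraposition, combining Lemma \ref{lemma1}, Lemma \ref{lemma-mid-set} and Lemma \ref{lemma-IND}. Let $W_b$ have $q-2$ vertices, and suppose that $W_b$ is dependent or meets $S_q$. By Lemma \ref{lemma1} there is an index $i$ with $N[p_i]\cap W_b=\emptyset$; fix such an $i$. We then split into two cases, according to whether every other closed neighborhood $N[p_x]$, $x\neq i$, contains exactly one landmark.

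\emph{Case 1: some $k\neq i$ has $|N[p_k]\cap W_b|\neq 1$.} Lemma \ref{lemma-mid-set} applies directly and yields distinct $p_a,p_b$ with $\delta(p_a\mid W_b)=\delta(p_b\mid W_b)$. We still have to check that $p_a,p_b\notin W_b$. In the proof of that lemma, $p_a,p_b$ are vertices whose closed neighborhoods avoid $W_b$. In particular $p_a\notin W_b$ and $p_b\notin W_b$, since $p_a\in N[p_a]$ and $p_b\in N[p_b]$. So they are indeed unresolved non-landmarks.

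\emph{Case 2: $|N[p_x]\cap W_b|=1$ for every $x\neq i$.} These are exactly the hypotheses of Lemma \ref{lemma-IND}. That lemma gives that $W_b$ is independent and $W_b\cap S_q=\emptyset$, which contradicts our assumption. Hence Case 2 cannot occur, and Case 1 always provides the required pair.

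The only step needing care is the membership claim $p_a,p_b\in S_p\setminus W_b$ in Case 1. Lemma \ref{lemma-mid-set} already states this explicitly, so we simply cite it and note the reason: the two vertices come from empty closed neighborhoods. Otherwise the argument is a straightforward dichotomy, and no further distance computations from Remark \ref{remark1} are needed.
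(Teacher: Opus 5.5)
Your proof is correct and is essentially the paper's argument. The paper also takes the missed neighborhood $N[p_i]$ from Lemma \ref{lemma1}, notes that the failure of the conclusion of Lemma \ref{lemma-IND} forces some other $N[p_k]$ to contain $0$ or at least $2$ landmarks, and then applies Lemma \ref{lemma-mid-set}. Your explicit check that the resulting pair lies outside $W_b$ is a harmless addition, and ``contraposition'' slightly mislabels what is really a direct case split.
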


\begin{proof}
If the conclusion of Lemma \ref{lemma-IND} fails, then either some neighborhood $N[p_k]$ is empty besides the empty neighborhood guaranteed by Lemma \ref{lemma1}, or one neighborhood receives at least two landmarks. Lemma \ref{lemma-mid-set} gives the desired pair.
\end{proof}

\noindent The next obstruction shows that two poorly represented rows of subdivision vertices force equal metric codes.
\begin{lemma}\label{lemmaD}
Let $W_b\subseteq V(BS(\Gamma(\mathbb Z_{pq})))$ with $|W_b|=q-2$. If there exist distinct indices $m$ and $n$ such that $|U^m\cap W_b|,|U^n\cap W_b|\in\{0,1\}$, then there exist distinct vertices $a,b\in V(BS(\Gamma(\mathbb Z_{pq})))\setminus W_b$ such that $\delta(a\mid W_b)=\delta(b\mid W_b)$.
\end{lemma}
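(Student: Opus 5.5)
The plan is to reduce to the rigid configuration of Lemma~\ref{lemma-IND} and then exhibit an explicit twin pair, splitting into cases by how many landmarks the two rows $U^m$ and $U^n$ actually contain.

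\emph{Reduction.} If $W_b$ is dependent or meets $S_q$, Corollary~\ref{lemma-ind-and-K} already gives two vertices of $S_p\setminus W_b$ with equal codes. By Lemma~\ref{lemma1} some $p_i$ has $N[p_i]\cap W_b=\emptyset$. If some other $N[p_k]$ does not contain exactly one landmark, Lemma~\ref{lemma-mid-set} finishes. So we may assume the following:
\begin{itemize}[label=$\bullet$]
\item $W_b\subseteq S_p\cup U$;
\item the column $N[p_i]$ is free of landmarks;
\item every other column $N[p_x]$, $x\neq i$, contains exactly one landmark, which is either $p_x$ or a single $u_x^j$.
\end{itemize}
All distance checks below use only Remark~\ref{remark1}.

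\emph{Case 1: $U^m\cap W_b=U^n\cap W_b=\emptyset$.} Take $a=q_m$ and $b=q_n$; neither lies in $W_b$. Every landmark $p_k$ is at distance $2$ from both. Every landmark $u_k^j$ has $j\notin\{m,n\}$, so it is at distance $3$ from both. Hence $\delta(q_m\mid W_b)=\delta(q_n\mid W_b)$.

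\emph{Case 2: exactly one row is nonempty,} say $U^m\cap W_b=\{u_a^m\}$ and $U^n\cap W_b=\emptyset$. Here $a\neq i$. Take $u_a^n$ and $u_i^m$. Neither is a landmark: column $a$'s unique landmark is $u_a^m$, and column $i$ is empty. We compare their distances to each landmark:
\begin{itemize}[label=$\bullet$]
\item to $u_a^m$: the distance is $2$ from both (same column for one, same row for the other);
\item to $p_k$ with $k\notin\{a,i\}$: the distance is $3$ from both;
\item to $u_k^j$ with $k\notin\{a,i\}$: since $j\neq m$ (the row $U^m$ has only $u_a^m$) and $j\neq n$, this distance is $4$ from both.
\end{itemize}
Thus the two codes coincide.

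\emph{Case 3: both rows are nonempty,} say $U^m\cap W_b=\{u_a^m\}$ and $U^n\cap W_b=\{u_b^n\}$. Then $a\neq b$, since each column holds only one landmark. Take $u_b^m$ and $u_a^n$, which are not landmarks for the same column reason. We compare their distances to each landmark:
\begin{itemize}[label=$\bullet$]
\item to $u_a^m$ and to $u_b^n$: each distance is $2$ from both vertices;
\item to $p_k$ with $k\notin\{a,b\}$: the distance is $3$ from both;
\item to $u_k^j$ with $k\notin\{a,b\}$: here $j\notin\{m,n\}$, so the distance is $4$ from both.
\end{itemize}
Since columns $a$ and $b$ contain no further landmarks, the codes are equal.

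The main obstacle is choosing the right pair in Case 2. The naive pairs $q_m,q_n$ or $u_i^m,u_i^n$ are separated by $u_a^m$. The fix is to cross the empty column $i$ with the occupied column $a$, so that $u_a^m$ sits at distance $2$ from both chosen vertices. The remaining work is only routine verification against the distance table in Remark~\ref{remark1}.
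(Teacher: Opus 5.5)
Your proof is correct and follows the paper's argument essentially verbatim. You use the same reduction to one empty column $N[p_i]$ with exactly one landmark in every other column, and the same three cases with the same indistinguishable pairs: $q_m,q_n$; the crossed pair $u_a^n,u_i^m$; and $u_b^m,u_a^n$. If anything, your reduction is slightly more careful than the paper's, because you explicitly invoke Lemma~\ref{lemma-mid-set} to get exactly one landmark in each $N[p_x]$, $x\neq i$, which Corollary~\ref{lemma-ind-and-K} alone does not state.
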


\begin{proof}
By Corollary \ref{lemma-ind-and-K}, we may assume that $W_b$ is independent, $W_b\cap S_q=\emptyset$, and there is a unique $p_i\in S_p$ with $N[p_i]\cap W_b=\emptyset$ while every other $N[p_x]$ contains exactly one landmark. We consider three cases.

If $U^m\cap W_b=\emptyset=U^n\cap W_b$, then every landmark of $W_b$ has the same distance from $q_m$ and $q_n$. Indeed, a landmark in $S_p$ has distance $2$ from both, while a landmark in $U^j$ has distance $1$ from $q_j$ if $j=m$ or $j=n$ and distance $3$ otherwise. Since there is no landmark in $U^m\cup U^n$, all coordinates agree. Hence $\delta(q_m\mid W_b)=\delta(q_n\mid W_b)$.

Suppose $U^m\cap W_b=\emptyset$ and $U^n\cap W_b=\{u_r^n\}$. Let $p_i$ be the unique vertex whose closed neighborhood is missed by $W_b$. Neither \( p_i \) nor any \( u_i^j \) or \( u_r^m \) is included in \( W_b \). The two vertices $u_r^m$ and $u_i^n$ are outside $W_b$. Since $p_i$ and $p_r$ $\notin$ $W_b$, a landmark in $S_p$ has distance $3$ from both. A landmark $u_s^t$ with $s\neq i$ and $t\neq m$ has distance $2$ from both if $t=n$ and $s=r$ and distance $4$ from both otherwise, with the single landmark $u_r^n$ giving distance $2$ to each. Thus $\delta(u_r^m\mid W_b)=\delta(u_i^n\mid W_b)$.

Finally, suppose $U^m\cap W_b=\{u_k^m\}$ and $U^n\cap W_b=\{u_j^n\}$. Since the closed neighborhoods $N[p_k]$ and $N[p_j]$ each contain exactly one landmark, the vertices $u_j^m$ and $u_k^n$ are not in $W_b$. Checking the distances in Remark \ref{remark1}, every landmark outside $U^m\cup U^n$ gives equal coordinates to $u_j^m$ and $u_k^n$, while $u_k^m$ and $u_j^n$ both give coordinate $2$ to each of them. Hence $\delta(u_j^m\mid W_b)=\delta(u_k^n\mid W_b)$.
\end{proof}

\medskip
\noindent The following lemma explains why, if no original $S_p$-vertex is selected, then every row $U^k$ must contain enough landmarks.
\begin{lemma}\label{lemma-sp-empty-row}
Let $W_b\subseteq V(BS(\Gamma(\mathbb Z_{pq})))$ with $|W_b|=q-2$ and $q\geq 2p-1$. If $S_p\cap W_b=\emptyset$ and there exists $k$ such that $|U^k\cap W_b|\leq 1$, then there exist distinct vertices $a,b\in V(BS(\Gamma(\mathbb Z_{pq})))\setminus W_b$ such that $\delta(a\mid W_b)=\delta(b\mid W_b)$.
\end{lemma}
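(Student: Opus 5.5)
The plan is to use Corollary~\ref{lemma-ind-and-K} to pin down the shape of $W_b$ completely, and then to exhibit an explicit pair with equal codes: one vertex of $S_q$ and one vertex of $S_p$.

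\textbf{Step 1 (reduction).} If $W_b$ is dependent or meets $S_q$, Corollary~\ref{lemma-ind-and-K} already produces two distinct vertices of $S_p\setminus W_b$ with equal codes, and we are done. Likewise, if the configuration described in Lemma~\ref{lemma-IND} fails, Lemma~\ref{lemma-mid-set} gives such a pair. So we may assume the following:
\begin{itemize}
\item $W_b$ is independent and $W_b\cap S_q=\emptyset$;
\item there is a unique $p_i$ with $N[p_i]\cap W_b=\emptyset$;
\item every other $N[p_x]$ contains exactly one landmark.
\end{itemize}
Together with the hypothesis $S_p\cap W_b=\emptyset$, this means $W_b\subseteq U$. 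More precisely, for each $x\neq i$ there is exactly one $j$ with $u_x^j\in W_b$, and no vertex $u_i^j$ lies in $W_b$.

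\textbf{Step 2 (a row with no landmark).} Suppose $U^k\cap W_b=\emptyset$. I will compare $q_k$ with $p_i$, neither of which lies in $W_b$. Every landmark has the form $u_s^t$ with $t\neq k$ and $s\neq i$. By Remark~\ref{remark1}, $d(q_k,u_s^t)=3$ because $t\neq k$, and $d(p_i,u_s^t)=3$ because $s\neq i$. Hence $\delta(q_k\mid W_b)=\delta(p_i\mid W_b)$, the constant vector $(3,\ldots,3)$.

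\textbf{Step 3 (a row with exactly one landmark).} Suppose $U^k\cap W_b=\{u_r^k\}$, so that $r\neq i$. I will compare $q_k$ with $p_r$, both outside $W_b$. Both vertices are at distance $1$ from $u_r^k$. Any other landmark $u_s^t$ satisfies two conditions:
\begin{itemize}
\item $t\neq k$, since $u_r^k$ is the only landmark in $U^k$;
\item $s\neq r$, since $N[p_r]$ contains exactly one landmark, namely $u_r^k$.
\end{itemize}
Therefore Remark~\ref{remark1} gives $d(q_k,u_s^t)=3=d(p_r,u_s^t)$. Hence $\delta(q_k\mid W_b)=\delta(p_r\mid W_b)$. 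Since $q_k\in S_q$ and $p_r\in S_p$, these vertices are distinct, which proves the lemma.

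\textbf{Main obstacle.} The only delicate point is Step~1: verifying that the reduction really forces $W_b\subseteq U$ with the one-landmark-per-$N[p_x]$ structure. Steps~2 and~3 rely on exactly this structure, in particular $s\neq r$ in Step~3. Notably, this route does not seem to need Lemma~\ref{lemmaD} or the hypothesis $q\geq 2p-1$. Any mismatch with how those earlier lemmas are phrased would have to be checked here.
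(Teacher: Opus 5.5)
Your proposal is correct and follows essentially the same route as the paper. Both reduce via Corollary~\ref{lemma-ind-and-K} and Lemma~\ref{lemma-mid-set} to $W_b\subseteq U$ with exactly one landmark in each $N[p_x]$, $x\neq i$; then pair $q_k$ with $p_i$ when $U^k\cap W_b=\emptyset$, and $q_k$ with $p_r$ when $U^k\cap W_b=\{u_r^k\}$. Your observation that neither $q\geq 2p-1$ nor Lemma~\ref{lemmaD} is needed holds for the paper's proof as well.
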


\begin{proof}
If $W_b$ is dependent or contains a vertex of $S_q$, the assertion follows from Corollary \ref{lemma-ind-and-K}. Hence we may assume that $W_b$ is independent, $W_b\cap S_q=\emptyset$, and, by Lemma \ref{lemma-mid-set}, there is a unique vertex $p_i\in S_p$ such that $N[p_i]\cap W_b=\emptyset$, while every other set $N[p_x]$ contains exactly one vertex of $W_b$. Since $S_p\cap W_b=\emptyset$, all landmarks lie in $U$.

Suppose first that $U^k\cap W_b=\emptyset$. For every landmark $u_s^t\in W_b$, we have $s\neq i$ and $t\neq k$. Therefore
$$
d(p_i,u_s^t)=3=d(q_k,u_s^t).
$$
Thus $\delta(p_i\mid W_b)=\delta(q_k\mid W_b)$.

Now suppose that $U^k\cap W_b=\{u_s^k\}$. Necessarily $s\neq i$, because no landmark is contained in $N[p_i]$. We compare $q_k$ and $p_s$. For the unique landmark $u_s^k$, both distances are equal to $1$. For any other landmark $u_a^t\in W_b\setminus\{u_s^k\}$, the equality $a\neq s$ follows from the fact that $N[p_s]\cap W_b=\{u_s^k\}$, and $t\neq k$ follows from $U^k\cap W_b=\{u_s^k\}$. Hence
$$
d(q_k,u_a^t)=3=d(p_s,u_a^t).
$$
Consequently $\delta(q_k\mid W_b)=\delta(p_s\mid W_b)$. In both cases two vertices outside $W_b$ have the same metric representation, as required.
\end{proof}

\medskip
\noindent We can now state the full criterion: a basis is exactly a balanced selection across the $S_p$-neighborhoods and the subdivision rows.
\begin{theorem}\label{thm-barycentric-basis}
Let $p$ and $q$ be distinct odd primes with $q\geq 2p-1$. Let $W_b\subseteq V(BS(\Gamma(\mathbb Z_{pq})))$ with $|W_b|=q-2$. Then $W_b$ is a metric basis of $BS(\Gamma(\mathbb Z_{pq}))$ if and only if all of the following conditions hold:
\begin{enumerate}[label=$\mathrm{(\arabic*)}$]
\item there exists a unique $p_i\in S_p$ such that $N[p_i]\cap W_b=\emptyset$, and $|N[p_k]\cap W_b|=1$ for every $p_k\in S_p\setminus\{p_i\}$;
\item $|U^j\cap W_b|\geq 2$ for every $j$, except possibly for one index $j$;
\item either $S_p\cap W_b\neq\emptyset$, or $|U^j\cap W_b|\geq 2$ for every $j$.
\end{enumerate}
\end{theorem}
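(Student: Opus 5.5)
The plan is to prove necessity from the obstruction lemmas already established, and sufficiency by checking directly, through the distance table of Remark \ref{remark1}, that every pair of non-landmark vertices is resolved. Throughout, Proposition \ref{ET}(1) gives $\dim(BS(\Gamma(\mathbb Z_{pq})))=q-2$. Hence a set $W_b$ with $|W_b|=q-2$ is a metric basis if and only if it is resolving.

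\emph{Necessity.} Let $W_b$ be a metric basis.
\begin{itemize}
\item By Lemma \ref{lemma1} some $N[p_i]$ misses $W_b$. If any other $N[p_k]$ had $|N[p_k]\cap W_b|\neq 1$, Lemma \ref{lemma-mid-set} would produce two vertices with equal codes. This gives (1), and uniqueness of $p_i$ is part of that statement.
\item If two distinct rows $U^m,U^n$ each contained at most one landmark, Lemma \ref{lemmaD} would give a non-resolved pair. This gives (2).
\item If $S_p\cap W_b=\emptyset$ and some row had $|U^k\cap W_b|\le 1$, Lemma \ref{lemma-sp-empty-row} would give a non-resolved pair. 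This gives (3).
\end{itemize}

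\emph{Sufficiency: setup.} Assume (1)--(3). By Lemma \ref{lemma-IND}, $W_b$ is independent and $W_b\cap S_q=\emptyset$. So every landmark is either some $p_x$ with $x\neq i$, or some $u_x^t$ with $x\ne i$, and exactly one landmark lies in each $N[p_x]$ for $x\neq i$. Since $p-1\ge 2$, condition (2) forces $W_b\cap U\neq\emptyset$.

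\emph{Separating the three classes.} A $U$-landmark is at odd distance (1 or 3) from every vertex of $S_p\cup S_q$, and at even distance (2 or 4) from every other vertex of $U$. So one $U$-landmark separates $U\setminus W_b$ from $S_p\cup S_q$. To separate $S_p$ from $S_q$, split on condition (3).
\begin{itemize}
\item If some $p_x\in W_b$, it is at distance $4$ from every other $p$-vertex and at distance $2$ from every $q_j$.
\item Otherwise every row contains at least two landmarks. Then each $q_j$ is at distance $1$ from at least two landmarks, while each $p_x$ is at distance $1$ from at most one landmark, namely the one in $N[p_x]$.
\end{itemize}

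\emph{Pairs inside one class.}
\begin{itemize}
\item Two vertices of $S_p$: the landmark in $N[p_x]$ ($x\neq i$) is within distance $1$ of $p_x$ and at distance $3$ or $4$ from any other $p$-vertex. The vertex $p_i$ has no landmark within distance $1$.
\item Two vertices $q_j,q_m$: by (2), at least one of $U^j,U^m$ contains a landmark. That landmark is at distance $1$ from one of the two and at distance $3$ from the other.
\end{itemize}

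\emph{Pairs inside $U$: the main obstacle.} Compare $u_a^b$ and $u_c^d$, which need the most case work.
\begin{itemize}
\item If $a=c$, then $b\neq d$. Use (2) to pick a row among $U^b,U^d$ with at least two landmarks, say $U^b$. It contains a landmark $u_s^b$ with $s\neq a$. Its distance is $2$ to $u_a^b$ and $4$ to $u_a^d$.
\item If $a\neq c$, we may assume $a\neq i$ by symmetry. Let $w$ be the landmark in $N[p_a]$.
  \begin{itemize}
  \item If $w=p_a$, its distances are $1$ versus $3$.
  \item If $w=u_a^t$, it gives $0$ or $2$ to $u_a^b$, and gives $2$ or $4$ to $u_c^d$ according as $t=d$ or not. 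This fails only when $w$ is at distance $2$ from both, i.e.\ $t=d\neq b$, so $w=u_a^d$.
  \end{itemize}
\item In that residual configuration, first try the landmark of $N[p_c]$ when $c\ne i$, which handles the situation symmetrically, and otherwise use a second landmark in row $U^d$ or $U^b$ supplied by (2).
\end{itemize}
I expect the bookkeeping for $u_a^d$ versus $u_c^d$ with $c=i$, which is exactly the situation of Lemma \ref{lemmaD}, to be the delicate point. Here one shows that (2) guarantees a row landmark in $U^b$, or an extra landmark in $U^d$, lying in a different column. Such a landmark is at distance $2$ from exactly one of the two vertices.
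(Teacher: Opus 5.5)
Your proposal is correct and follows essentially the same route as the paper. Necessity comes from Lemmas \ref{lemma1}, \ref{lemma-mid-set}, \ref{lemmaD} and \ref{lemma-sp-empty-row}, and sufficiency is a pair-by-pair check against Remark \ref{remark1}; your bipartite parity observation (every landmark separates $U$ from $S_p\cup S_q$) neatly replaces the paper's separate case work for the mixed pairs. One small slip: in your last paragraph the residual pair is $u_a^b$ versus $u_i^d$ (with $u_a^d\in W_b$), not $u_a^d$ versus $u_c^d$.
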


\begin{proof}
Let $W_b$ be a metric basis. Since $q\geq 2p-1$, Proposition \ref{ET} gives $|W_b|=q-2$. Lemma \ref{lemma1} gives a vertex $p_i$ whose closed neighborhood is missed. If some other closed neighborhood $N[p_k]$ failed to contain exactly one landmark, Lemma \ref{lemma-mid-set} would produce two vertices of $S_p$ with identical metric representations, contradicting the resolving property. Hence condition $\mathrm{(1)}$ holds. Corollary \ref{lemma-ind-and-K} shows that $W_b$ is independent and contains no vertex of $S_q$. Lemma \ref{lemmaD} forces condition $\mathrm{(2)}$, and Lemma \ref{lemma-sp-empty-row} forces condition $\mathrm{(3)}$.

Conversely, suppose $W_b$ satisfies $\mathrm{(1)}$--$\mathrm{(3)}$. We prove that it resolves the graph. Assume that two distinct vertices $e_1,e_2\in V(BS(\Gamma(\mathbb Z_{pq})))\setminus W_b$ have the same metric representation with respect to $W_b$. We separate the possible positions of $e_1$ and $e_2$.

If $e_1,e_2\in S_p$, then condition $\mathrm{(1)}$ gives a landmark in exactly one of $N[e_1]$ and $N[e_2]$, unless both are the unique missed vertex, which is impossible because they are distinct. That landmark is at distance $1$ from exactly one of them, contradicting equality of metric codes.

If $e_1=u_{n_1}^{m_1}$ and $e_2=u_{n_2}^{m_2}$ lie in $U$, first suppose $m_1=m_2$ and $n_1\neq n_2$. By condition $\mathrm{(1)}$, a landmark lies in exactly one of $N[p_{n_1}]$ and $N[p_{n_2}]$, unless one of these neighborhoods is the unique missed one. In either case, some landmark distinguishes $u_{n_1}^{m_1}$ and $u_{n_2}^{m_2}$ by the distance formulas in Remark \ref{remark1}. If $m_1\neq m_2$, condition $\mathrm{(2)}$ gives two landmarks in one of the rows $U^{m_1}$ or $U^{m_2}$, except possibly for one row. Two distinct landmarks in a row cannot have the same pair of distances to $u_{n_1}^{m_1}$ and $u_{n_2}^{m_2}$ unless the two vertices coincide, which they do not. Hence two subdivision vertices are always separated.

If $e_1,e_2\in S_q$, say $e_1=q_{m_1}$ and $e_2=q_{m_2}$ with $m_1\neq m_2$, then condition $\mathrm{(2)}$ gives a landmark in $U^{m_1}$ or $U^{m_2}$. Such a landmark is at distance $1$ from one of the two vertices and distance $3$ from the other.

Let $e_1\in S_p$ and $e_2\in U$, say $e_1=p_i$ and $e_2=u_j^m$. If $S_p\cap W_b\neq\emptyset$, choose $p_k\in S_p\cap W_b$. Then $d(p_k,p_i)=0$ if $k=i$ and $4$ otherwise, whereas $d(p_k,u_j^m)$ is $1$ or $3$; hence the two vertices are distinguished. If $S_p\cap W_b=\emptyset$, condition $\mathrm{(3)}$ gives two distinct landmarks in $U^m$. At most one of these landmarks can have the same $S_p$-index as $p_i$, so one of them gives different distances to $p_i$ and $u_j^m$.

Let $e_1\in S_p$ and $e_2\in S_q$. If $S_p\cap W_b\neq\emptyset$, a landmark $p_k\in S_p\cap W_b$ gives distance $4$ or $0$ to $e_1$ and distance $2$ to $e_2$. If $S_p\cap W_b=\emptyset$, condition $\mathrm{(3)}$ gives two landmarks in the row corresponding to $e_2$. Since they have distinct $S_p$-indices, at least one is not adjacent to $e_1$; the two distance coordinates are then $3$ and $1$.

Finally, let $e_1\in U$ and $e_2\in S_q$, say $e_1=u_i^m$ and $e_2=q_t$. If $m=t$, any landmark in $S_p\cap W_b$ separates them; if $S_p\cap W_b=\emptyset$, condition $\mathrm{(3)}$ supplies a second landmark in $U^m$ distinct from $u_i^m$, giving distance $2$ to $e_1$ and $1$ to $e_2$. If $m\neq t$, any landmark in $U^m\cap W_b$ has distance $2$ or $0$ from $e_1$ and distance $3$ from $e_2$. Thus every possible pair is separated. Therefore $W_b$ is a resolving set. Since $|W_b|=q-2=\dim(BS(\Gamma(\mathbb Z_{pq})))$, it is a metric basis.
\end{proof}

\medskip
\noindent The characterization has the following immediate structural meaning for every minimum resolving set.
\begin{corollary}\label{cor-basis-independent}
For $q\geq 2p-1$, every metric basis of $BS(\Gamma(\mathbb Z_{pq}))$ is an independent subset of $S_p\cup U$ and contains no vertex of $S_q$.
\end{corollary}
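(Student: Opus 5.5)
The corollary should follow directly from the characterization in Theorem \ref{thm-barycentric-basis} together with Lemma \ref{lemma-IND}. Let $W_b$ be any metric basis of $BS(\Gamma(\mathbb Z_{pq}))$ with $q\geq 2p-1$. By part (1) of Proposition \ref{ET}, $|W_b|=q-2$, so Theorem \ref{thm-barycentric-basis} applies. Its condition $\mathrm{(1)}$ says there is exactly one $p_i$ with $N[p_i]\cap W_b=\emptyset$, and that every other closed neighborhood $N[p_x]$ meets $W_b$ in exactly one vertex. These are precisely the hypotheses of Lemma \ref{lemma-IND}, which then yields that $W_b$ is independent and $W_b\cap S_q=\emptyset$.

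It remains to place $W_b$ inside $S_p\cup U$. The vertex set is the disjoint union $S_q\cup S_p\cup U$, so once $W_b\cap S_q=\emptyset$ we get $W_b\subseteq S_p\cup U$ immediately.

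Alternatively, the contrapositive of Corollary \ref{lemma-ind-and-K} gives the same conclusion. If $W_b$ were dependent or met $S_q$, two distinct vertices of $S_p\setminus W_b$ would share a metric code, which contradicts $W_b$ being resolving.

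I expect no genuine obstacle here. The only point needing care is that every metric basis has cardinality exactly $q-2$, so that the lemmas stated for $|W_b|=q-2$ apply. This is supplied by Proposition \ref{ET}, and it is where the hypothesis $q\geq 2p-1$ is used.
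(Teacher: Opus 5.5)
Your proposal is correct and is essentially the paper's own argument. Every metric basis has size $q-2$ by Proposition~\ref{ET}, so condition $\mathrm{(1)}$ of Theorem~\ref{thm-barycentric-basis} holds, and Lemma~\ref{lemma-IND} then gives independence and $W_b\cap S_q=\emptyset$, hence $W_b\subseteq S_p\cup U$; your alternative via the contrapositive of Corollary~\ref{lemma-ind-and-K} is the same reasoning, since that corollary is itself derived from Lemmas~\ref{lemma-mid-set} and~\ref{lemma-IND}.
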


\begin{proof}
This is exactly the structural consequence obtained from condition $\mathrm{(1)}$ and Lemma \ref{lemma-IND} in the proof of Theorem \ref{thm-barycentric-basis}.
\end{proof}

\noindent The next projection result records that a metric basis chooses one representative over each required $S_p$-fiber.
\begin{proposition}\label{prop-projection-basis}
Let $W_b$ be a metric basis of $BS(\Gamma(\mathbb Z_{pq}))$ for $q\geq 2p-1$. Define the projection $\pi_p:S_p\cup U\to S_p$ by $\pi_p(p_i)=p_i$ and $\pi_p(u_i^j)=p_i$. Then $\pi_p$ maps $W_b$ bijectively onto $S_p\setminus\{p_i\}$, where $p_i$ is the unique vertex satisfying $N[p_i]\cap W_b=\emptyset$.
\end{proposition}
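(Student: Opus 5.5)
The plan is to read off the statement directly from condition $\mathrm{(1)}$ of Theorem \ref{thm-barycentric-basis} together with Corollary \ref{cor-basis-independent}.

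First, since $W_b$ is a metric basis and $q\geq 2p-1$, Proposition \ref{ET} gives $|W_b|=q-2$. Corollary \ref{cor-basis-independent} then gives $W_b\subseteq S_p\cup U$, so the map $\pi_p$ is defined on all of $W_b$. Next, the necessity direction of Theorem \ref{thm-barycentric-basis} supplies a unique $p_i$ with $N[p_i]\cap W_b=\emptyset$, and it gives $|N[p_k]\cap W_b|=1$ for every $k\neq i$.

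The key observation is that for any $x\in S_p\cup U$ and any index $k$, we have $\pi_p(x)=p_k$ if and only if $x\in N[p_k]$. This holds because $N[p_k]=\{p_k\}\cup\{u_k^j:1\leq j\leq p-1\}$ by Remark \ref{remark1}. Hence the fiber of $\pi_p$ over $p_k$, restricted to $W_b$, is exactly $N[p_k]\cap W_b$.

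From this identification both properties follow at once. The fiber over $p_i$ is empty, so the image of $\pi_p$ on $W_b$ lies in $S_p\setminus\{p_i\}$. Every other fiber has exactly one element, which gives both injectivity and surjectivity onto $S_p\setminus\{p_i\}$. As a consistency check, $|W_b|=q-2=|S_p\setminus\{p_i\}|$.

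There is no real obstacle here. The only point needing care is confirming that $W_b$ contains no vertex of $S_q$. Otherwise $\pi_p$ would not be defined on such a vertex, and that landmark would lie outside every fiber. This is exactly what Corollary \ref{cor-basis-independent} (via Lemma \ref{lemma-IND}) provides.
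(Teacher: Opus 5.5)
Your proposal is correct and follows essentially the same route as the paper: both read off condition $\mathrm{(1)}$ of Theorem \ref{thm-barycentric-basis} and identify the fibers of $\pi_p$ with the sets $N[p_k]\cap(S_p\cup U)$. The paper's write-up is terser; you add an explicit appeal to Corollary \ref{cor-basis-independent} so that $\pi_p$ is defined on all of $W_b$, and you get bijectivity directly from the fiber sizes, where the paper uses the image together with a cardinality count.
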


\begin{proof}
By Theorem \ref{thm-barycentric-basis}, each closed neighborhood $N[p_k]$ with $k\neq i$ contains exactly one landmark of $W_b$, and $N[p_i]$ contains none. Since the fibers of $\pi_p$ are precisely the sets $N[p_k]\cap(S_p\cup U)$, the image of $W_b$ is $S_p\setminus\{p_i\}$. The cardinalities are equal, so the map is bijective on $W_b$.
\end{proof}

\medskip
\noindent The following example illustrates the three conditions of Theorem \ref{thm-barycentric-basis} in the smallest range displayed in Figure \ref{fig:bs-z55}.
\begin{example}\label{ex-barycentric-z55}
Let $p=5$ and $q=11$. Then $q=2p+1\geq 2p-1$, and $\dim(BS(\Gamma(\mathbb Z_{55})))=9$. The set
$$
W_b=\{p_1,u_2^1,u_3^1,u_4^2,u_5^2,u_6^3,u_7^3,u_8^4,u_9^4\}
$$
has cardinality $9$. It meets $N[p_k]$ exactly once for $1\leq k\leq 9$ and misses $N[p_{10}]$. Moreover, each of $U^1,U^2,U^3,U^4$ contains two landmarks, and $W_b\cap S_p=\{p_1\}$. By Theorem \ref{thm-barycentric-basis}, $W_b$ is a metric basis of $BS(\Gamma(\mathbb Z_{55}))$.
\end{example}

\section{Metric dimension of $M$-subdivisions of $\Gamma(\mathbb Z_{pq})$}\label{sec-m-subdivision}

In this section, $G=\Gamma(\mathbb Z_{pq})\cong K_{p-1,q-1}$ has bipartition $S_q\cup S_p$. An $M$-subdivision of $G$ is obtained by subdividing selected edges of $G$. If the selected edges are pairwise disjoint, we call the resulting graph a matching subdivision. For a subdivided edge $q_ip_j$, the inserted vertex is denoted by $m_{ij}$.

For each $q_i\in S_q$, define
$$
M_i=\{m_{ij}:q_ip_j\text{ is subdivided}\},\qquad c_i=|M_i|.
$$
When a fixed number of edges is subdivided, the sets $M_i$ are classified as follows:
\begin{enumerate}[label=$\mathrm{(\arabic*)}$]
\item $M_i$ is a null partition if $c_i=0$;
\item $M_i$ is a unique partition if $c_i=1$;
\item $M_i$ is a good partition if $c_i\geq 2$.
\end{enumerate}
Let
$$
e_1=|\{i:c_i=0\}|,
\qquad o=|\{i:c_i=1\}|,
\qquad g=|\{i:c_i\geq 2\}|.
$$
Then $e_1+o+g=p-1$.

\medskip
\noindent The first subdivision count compares null and good partitions when only $p-3$ edges are subdivided.
\begin{lemma}\label{lemma-null-count-p3}
If exactly $p-3$ edges are subdivided, then $e_1\geq g+2$.
\end{lemma}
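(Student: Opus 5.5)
This is a double-counting argument on the partition sizes $c_i$. We have $\sum_{i=1}^{p-1} c_i = p-3$, since each subdivided edge $q_ip_j$ contributes exactly one vertex $m_{ij}$ to exactly one set $M_i$. Together with the identity $e_1+o+g=p-1$, this should force many null partitions.

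First we bound the sum from below using the classification. Each unique partition contributes exactly $1$, each good partition contributes at least $2$, and each null partition contributes $0$. Hence
$$
p-3=\sum_{i=1}^{p-1} c_i \;\geq\; o+2g .
$$
Next we substitute $o = p-1-e_1-g$ into this inequality. This gives $p-3\geq p-1-e_1-g+2g$, which rearranges to $e_1\geq g+2$, as required.

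The main point to check is that the count $\sum_i c_i=p-3$ is exact. Every subdivided edge joins some $q_i\in S_q$ to some $p_j\in S_p$, because $G$ is bipartite with bipartition $S_q\cup S_p$. So each inserted vertex is counted in precisely one $M_i$. This does not depend on whether the subdivided edges form a matching. Beyond this, the argument is a routine rearrangement. Equality holds exactly when every nonempty $M_i$ has $c_i\in\{1,2\}$.
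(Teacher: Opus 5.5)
Your proof is correct and follows the paper's argument: bound $\sum_{i=1}^{p-1}c_i=p-3$ below by $o+2g$, then substitute $o=p-1-e_1-g$ to get $e_1\geq g+2$. Your extra remarks are also right: the count is exact because each subdivided edge contributes its vertex to exactly one $M_i$, and equality holds exactly when every good partition has $c_i=2$.
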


\begin{proof}
Since exactly $p-3$ edges are subdivided,
$$
\sum_{i=1}^{p-1}c_i=p-3.
$$
Every unique partition contributes $1$, while every good partition contributes at least $2$. Hence
$$
p-3=\sum_{i=1}^{p-1}c_i\geq o+2g.
$$
Using $o=p-1-e_1-g$, we obtain
$$
p-3\geq p-1-e_1-g+2g=p-1-e_1+g,
$$
so $e_1\geq g+2$.
\end{proof}

\medskip
\noindent Null partitions create twin vertices in $S_q$, and the next lemma converts that twin structure into a lower bound for resolving sets.
\begin{lemma}\label{lemma3.1}
Let $W_m$ be a resolving set of an $M$-subdivision of $\Gamma(\mathbb Z_{pq})$. If $e_1$ denotes the number of null partitions, then $|W_m\cap S_q|\geq e_1-1$.
\end{lemma}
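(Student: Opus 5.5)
The plan is to show that the $e_1$ vertices $q_i$ with $c_i=0$ form a family of pairwise twins in the $M$-subdivision. A resolving set must then contain all but at most one of them. Let $G'$ denote the $M$-subdivision and let $N_0=\{q_i\in S_q: c_i=0\}$, so $|N_0|=e_1$.

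First, for $q_i\in N_0$, no edge at $q_i$ is subdivided. Hence $N_{G'}(q_i)=S_p$, because $q_i$ is adjacent in $G$ to every vertex of $S_p$ and keeps all these edges. So any two distinct $q_i,q_k\in N_0$ have the same open neighbourhood $S_p$ in $G'$; they are non-adjacent twins, since $S_q$ is independent in $G$ and therefore also in $G'$.

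Second, I record the standard twin fact, proved directly rather than cited. If $N(x)=N(y)$, then every vertex $z\neq x,y$ satisfies $d(z,x)=d(z,y)$. Indeed, any shortest path from $z$ to $x$ has length at least $1$, and its last internal vertex (or $z$ itself, if $z$ is adjacent to $x$) lies in $N(x)=N(y)$. Replacing the final vertex $x$ by $y$ gives a walk of the same length to $y$, and symmetry gives equality. Consequently, if neither $x$ nor $y$ lies in $W_m$, then $\delta(x\mid W_m)=\delta(y\mid W_m)$. This contradicts the assumption that $W_m$ is resolving.

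Third, conclude by counting. If $|W_m\cap N_0|\le e_1-2$, then two distinct vertices of $N_0$ lie outside $W_m$ and are twins, which is impossible. Hence $|W_m\cap S_q|\ge|W_m\cap N_0|\ge e_1-1$. If $e_1\le 1$, the bound is trivial.

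The only delicate point is making sure the subdivision does not alter the neighbourhoods of null-partition vertices. This holds because a subdivided edge $q_ip_j$ only changes adjacencies at its two endpoints, and $q_i\in N_0$ is never such an endpoint. The twin–distance argument also works whether or not $G'$ keeps diameter small, so no case analysis on matching structure is needed.
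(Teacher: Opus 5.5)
Your proof is correct and follows the same route as the paper: the $e_1$ null-partition vertices all have open neighbourhood $S_p$ in the $M$-subdivision, so they form a twin class, and a resolving set can omit at most one of them. The only difference is that you prove the general twin-distance fact instead of listing distances by landmark type. That is slightly more robust: the paper's claim that landmarks in $S_q$ are at distance $2$ fails if some $q_k$ has all its incident edges subdivided (the distance is then $3$), although the two distances remain equal, so the paper's conclusion still holds.
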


\begin{proof}
Let $M_{i_1},M_{i_2},\ldots,M_{i_{e_1}}$ be the null partitions. Since no edge incident with $q_{i_s}$ is subdivided, the vertices $q_{i_1},q_{i_2},\ldots,q_{i_{e_1}}$ have the same open neighborhood, namely $S_p$. Therefore they form a twin class. If a resolving set omitted two vertices from this class, those two omitted vertices would have the same distance to every landmark: distance $1$ to landmarks in $S_p$, distance $2$ to landmarks in $S_q$, and distance $2$ to subdivision landmarks. Thus all but at most one of the null-partition vertices must belong to $W_m$, giving $|W_m\cap S_q|\geq e_1-1$.
\end{proof}

\medskip
\noindent The preceding twin bound also forces many vertices on the $S_p$-side to remain outside a small resolving set.
\begin{corollary}\label{cor-sp-outside}
If $|W_m|=q-2$, then at least $e_1$ vertices of $S_p$ lie outside $W_m$.
\end{corollary}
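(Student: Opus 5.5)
The argument is a counting step that combines Lemma \ref{lemma3.1} with the fixed cardinality $|W_m|=q-2$ and the partition $V=S_q\cup S_p\cup\bigcup_i M_i$. The corollary says that if $W_m$ is a resolving set of size $q-2$ (the statement implicitly treats $W_m$ as the resolving set from Lemma \ref{lemma3.1}), then $|S_p\setminus W_m|\geq e_1$.

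First, I apply Lemma \ref{lemma3.1}, which gives $|W_m\cap S_q|\geq e_1-1$. Next, I bound $|W_m\cap S_p|$ using the disjointness of $S_q$ and $S_p$:
\[
|W_m\cap S_p|\leq |W_m|-|W_m\cap S_q|\leq (q-2)-(e_1-1)=q-1-e_1 .
\]
Since $|S_p|=q-1$, the number of vertices of $S_p$ outside $W_m$ is at least
\[
(q-1)-(q-1-e_1)=e_1,
\]
which is the claim.

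I expect the only delicate point to be the degenerate case $e_1=0$. There the bound $|W_m\cap S_q|\geq -1$ is vacuous, but the conclusion "at least $0$ vertices" is trivially true, so no separate argument is needed.

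I will also note explicitly that subdivision landmarks in $W_m$ only decrease $|W_m\cap S_p|$ further. The inequality above is therefore not sharp in general, but it suffices for the statement.
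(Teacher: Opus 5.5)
Your proposal is correct and matches the paper's proof: you use Lemma \ref{lemma3.1} to get $|W_m\cap S_q|\geq e_1-1$, deduce $|W_m\cap S_p|\leq (q-2)-(e_1-1)=q-1-e_1$, and subtract from $|S_p|=q-1$. Your added remarks are accurate but not needed for the argument: $W_m$ must be read as a resolving set (as Lemma \ref{lemma3.1} requires), and the case $e_1=0$ is trivial.
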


\begin{proof}
By Lemma \ref{lemma3.1}, at least $e_1-1$ vertices of $W_m$ lie in $S_q$. Hence at most
$$
(q-2)-(e_1-1)=q-e_1-1
$$
vertices of $S_p$ can belong to $W_m$. Since $|S_p|=q-1$, at least
$$
(q-1)-(q-e_1-1)=e_1
$$
vertices of $S_p$ lie outside $W_m$.
\end{proof}

\medskip
\noindent The next lemma isolates a large subset of $S_p$ that is invisible to the selected non-$S_q$ landmarks.
\begin{lemma}\label{lemma-3.2}
Let $W_m$ be a resolving set of an $M$-subdivision of $\Gamma(\mathbb Z_{pq})$ with $|W_m|=q-2$. Suppose
$$
|W_m\cap S_q|=(e_1-1)+a.
$$
Then there exists $S_1\subseteq S_p$ such that
$$
|S_1|\geq e_1+a,
$$
$S_1\cap W_m=\emptyset$, and no vertex of $S_1$ is adjacent to a subdivision vertex belonging to $W_m\setminus S_q$.
\end{lemma}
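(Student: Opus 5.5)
The plan is a counting argument in the spirit of Lemma \ref{lemma1}: each landmark outside $S_q$ will be charged to a single vertex of $S_p$, and $S_1$ will consist of the vertices of $S_p$ that receive no charge. The resolving property of $W_m$ is used only to make the decomposition $|W_m\cap S_q|=(e_1-1)+a$ meaningful, with $a\geq 0$ by Lemma \ref{lemma3.1}; beyond that, only the cardinality $|W_m|=q-2$ enters.

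First, the vertex set of an $M$-subdivision is $S_q\cup S_p\cup\{m_{ij}\}$. By definition, a subdivision vertex $m_{ij}$ has exactly two neighbours, $q_i$ and $p_j$, so it is adjacent to exactly one vertex of $S_p$. Define the projection $\pi:W_m\setminus S_q\to S_p$ by
$$
\pi(p_j)=p_j,\qquad \pi(m_{ij})=p_j,
$$
in analogy with Proposition \ref{prop-projection-basis}. Every $p_j$ that lies in $W_m$, or is adjacent to a subdivision vertex of $W_m\setminus S_q$, belongs to the image $\pi(W_m\setminus S_q)$.

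Next, count. Since $|W_m|=q-2$ and $|W_m\cap S_q|=(e_1-1)+a$,
$$
|\pi(W_m\setminus S_q)|\leq |W_m\setminus S_q| = (q-2)-(e_1-1)-a = q-1-e_1-a.
$$
Set $S_1=S_p\setminus \pi(W_m\setminus S_q)$. Then
$$
|S_1|\geq (q-1)-(q-1-e_1-a)=e_1+a .
$$
By construction, no vertex of $S_1$ lies in $W_m$, since such a vertex would be its own image under $\pi$. Likewise, no vertex of $S_1$ is adjacent to a subdivision landmark, since that landmark would project onto it. This gives all three required properties.

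There is essentially no obstacle. The only point to verify carefully is that each subdivision vertex has a single $S_p$-neighbour, so that $\pi$ is well defined. This holds for any $M$-subdivision, not only for matching subdivisions, because each inserted vertex $m_{ij}$ lies on the single edge $q_ip_j$. As a consistency check, taking $a=0$ recovers Corollary \ref{cor-sp-outside}.
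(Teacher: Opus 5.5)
Your proof is correct and is essentially the paper's argument. Both charge each landmark in $W_m\setminus S_q$ to a single vertex of $S_p$ (itself, or the unique $S_p$-neighbour of a subdivision vertex), bound the number of charged vertices by $|W_m\setminus S_q|=q-1-e_1-a$, and take $S_1$ to be the uncharged remainder of $S_p$; your projection $\pi$ simply makes that charging explicit.
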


\begin{proof}
Since $|W_m|=q-2$ and $|W_m\cap S_q|=(e_1-1)+a$, we have
$$
|W_m\setminus S_q|=q-e_1-a-1.
$$
Every vertex of $W_m\setminus S_q$ accounts for at most one vertex of $S_p$: if it is a vertex of $S_p$, it accounts for itself; if it is a subdivision vertex, it is adjacent to exactly one vertex of $S_p$. Hence at most $q-e_1-a-1$ vertices of $S_p$ are either selected in $W_m$ or adjacent to a selected subdivision vertex. Since $|S_p|=q-1$, at least
$$
(q-1)-(q-e_1-a-1)=e_1+a
$$
vertices remain. Taking $S_1$ to be the set of all remaining vertices gives the result.
\end{proof}

\medskip
\noindent This lemma prevents too many vertices of the invisible set from being completely unaffected by subdivision.
\begin{lemma}\label{lemma-3.3}
Let $S_1$ be the set obtained in Lemma \ref{lemma-3.2}. Then at most one vertex of $S_1$ is not incident with any subdivided edge.
\end{lemma}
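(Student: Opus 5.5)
The plan is a twin argument, in the spirit of Lemma \ref{lemma3.1}. Suppose, for contradiction, that two distinct vertices $p_a,p_b\in S_1$ are both incident with no subdivided edge. Then in the $M$-subdivision each of $p_a$ and $p_b$ is still adjacent to every vertex of $S_q$ and to nothing else, so $N(p_a)=N(p_b)=S_q$. They are therefore twins, and I will show that $W_m$ cannot separate them. Both lie outside $W_m$, since $S_1\cap W_m=\emptyset$ by Lemma \ref{lemma-3.2}, so it suffices to check that every landmark $w\in W_m$ satisfies $d(p_a,w)=d(p_b,w)$.

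I will split according to where $w$ lies.
\begin{itemize}
\item If $w\in S_q$, then $d(p_a,w)=1=d(p_b,w)$.
\item If $w\in S_p$, then $w\neq p_a,p_b$, and both distances equal $2$, via any common neighbour in $S_q$.
\item If $w=m_{ij}$ is a subdivision vertex, then $w$ is adjacent to exactly $q_i$ and $p_j$. Here $j\neq a,b$: no edge at $p_a$ or $p_b$ is subdivided, and in any case Lemma \ref{lemma-3.2} says no vertex of $S_1$ is adjacent to a selected subdivision vertex. Hence both $p_a$ and $p_b$ reach $w$ through $q_i$ at distance $2$. Neither is adjacent to $w$, so neither distance can be smaller.
\end{itemize}

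In every case the coordinates agree, so $\delta(p_a\mid W_m)=\delta(p_b\mid W_m)$. This contradicts the assumption that $W_m$ is a resolving set, and the lemma follows.

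The only point needing care is the distance to a subdivision landmark. Here I will use that $q_i$ stays adjacent to both $p_a$ and $p_b$, because neither of their edges to $q_i$ is subdivided. This gives the common value $2$, and no other distance is possible.
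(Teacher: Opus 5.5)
Your proof is correct and is essentially the paper's argument: you show $N(p_a)=N(p_b)=S_q$, note that both vertices lie outside $W_m$, and check that landmarks in $S_q$, in $S_p$, and among the subdivision vertices each give equal coordinates. One small point, which the paper shares: a landmark $w\in S_p$ all of whose edges are subdivided has no neighbour in $S_q$, so it lies at distance $3$ (not $2$) from both $p_a$ and $p_b$; the coordinates still agree by the twin property $N(p_a)=N(p_b)$, and this case cannot arise when at most $p-2$ edges are subdivided.
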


\begin{proof}
If two distinct vertices $s_a,s_b\in S_1$ were not incident with any subdivided edge, then both would have open neighborhood $S_q$ in the subdivision graph. Hence they would be twins. Since $S_1\cap W_m=\emptyset$, neither belongs to $W_m$. Every landmark in $S_q$ has distance $1$ from both, every landmark in $S_p$ has distance $2$ from both, and every subdivision landmark has distance $2$ from both. Thus $\delta(s_a\mid W_m)=\delta(s_b\mid W_m)$, contradicting the resolving property.
\end{proof}

\medskip
\noindent The next estimate measures how much distinguishing power a good or unique partition can contribute.
\begin{lemma}\label{lemma-3.4}
Let $S_1$ be as in Lemma \ref{lemma-3.2}. Each good set $V_i=M_i\cup\{q_i\}$ can distinguish at most one vertex of $S_1$. A unique set $V_j=M_j\cup\{q_j\}$ can distinguish a vertex of $S_1$ only if $q_j\in W_m$.
\end{lemma}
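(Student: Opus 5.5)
The plan is to compute, for a vertex $s\in S_1$, its distances to every possible landmark inside a set $V_i=M_i\cup\{q_i\}$. The goal is to show that these distances take only two values, and that which value occurs depends on a single binary property of $s$. I read ``$V_i$ distinguishes a vertex $s\in S_1$'' as meaning that the coordinates of $W_m\cap V_i$ give $s$ a code different from that of the other vertices of $S_1$. The main input is the defining property of $S_1$ from Lemma \ref{lemma-3.2}: no vertex of $S_1$ lies in $W_m$, and no vertex of $S_1$ is adjacent to a subdivision landmark. Consequently every subdivision landmark $m_{ij}\in W_m\cap M_i$ has $p_j\notin S_1$.

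First I will record the distances for $s=p_k\in S_1$. If the edge $q_ip_k$ is not subdivided, then $d(s,q_i)=1$. For a landmark $m_{ij}$ with $j\neq k$ we get $d(s,m_{ij})=2$, along the path $s,q_i,m_{ij}$. If instead $q_ip_k$ is subdivided, then $d(s,q_i)=2$ and $d(s,m_{ij})=3$. In that case the route through $q_i$ has length $3$, and the route through $p_j$, namely $s,q_x,p_j,m_{ij}$ for an unsubdivided edge $q_xp_j$, also has length $3$. Shorter paths are impossible by bipartiteness and the subdivision structure; checking this is routine from the definition of the $M$-subdivision.

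Hence every landmark of $V_i$ induces the same bipartition of $S_1$. One side is $A_i=\{p_k\in S_1: m_{ik}\text{ exists}\}$, and the other side is $S_1\setminus A_i$. The restricted code of $s$ on $V_i\cap W_m$ therefore takes at most two values, constant on each side. Such a two-valued coordinate block can single out at most one vertex of $S_1$, namely when one side of the bipartition is a singleton. This gives the claim for good sets. I expect the main obstacle to be making this step precise: the two-valued observation must be phrased so that it really yields ``at most one vertex'', and in the matching case I would use that $A_i$ is small.

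For a unique set $V_j$ with $M_j=\{m_{jk}\}$, suppose $m_{jk}\in W_m$. Then $p_k\notin S_1$, and for every $s\in S_1$ the edge $sq_j$ is unsubdivided, so $d(s,m_{jk})=2$ uniformly. Thus this landmark separates nothing in $S_1$. The only possible separating landmark in $V_j$ is therefore $q_j$ itself. It gives distance $2$ to $p_k$ if $p_k\in S_1$, and distance $1$ to all other vertices of $S_1$. So $V_j$ can distinguish a vertex of $S_1$ only if $q_j\in W_m$, which completes the plan.
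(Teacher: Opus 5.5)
Your proof is correct and follows the paper's route. Both arguments use the defining property of $S_1$ (no vertex of $S_1$ is adjacent to a subdivision landmark) to show two things: the landmarks of $V_i\cap W_m$ cannot separate vertices of $S_1$ incident with $M_i$, and in a unique set only $q_j$ can separate anything. Your explicit distance table, showing that the block is constant on both $A_i$ and $S_1\setminus A_i$, fills in what the paper's sketch leaves implicit.

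Two small repairs are needed. First, justify the bound $d(p_k,m_{ij})\geq 3$ for $p_k\in A_i$ by noting that the only neighbours of $m_{ij}$ are $q_i$ and $p_j$, neither adjacent to $p_k$; an $M$-subdivision is generally not bipartite, since a subdivided edge creates $5$-cycles. Second, the ``at most one vertex'' step is immediate once $|S_1|\geq 3$, because two singled-out vertices would make both classes singletons. Your closing remark about the matching case is therefore unnecessary, since a matching subdivision has no good sets at all.
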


\begin{proof}
Fix a good partition $M_i$. If two vertices $s_a,s_b\in S_1$ are both incident with subdivision vertices from $M_i$, then those subdivision vertices do not lie in $W_m$ by the definition of $S_1$. For every landmark $x\in V_i\cap W_m$, the distances from $x$ to $s_a$ and $s_b$ are equal, unless exactly one of $s_a,s_b$ is the $S_p$-neighbor of a selected subdivision vertex. This exceptional situation can occur for at most one vertex of $S_1$. Thus one good set can single out at most one vertex of $S_1$.

If $M_j$ is unique, say $M_j=\{m_{jt}\}$, then the only possible landmark in $V_j$ that can separate the unique $S_p$-neighbor $p_t$ from the other vertices of $S_1$ is $q_j$. The subdivision vertex $m_{jt}$ is not usable for this purpose when $p_t\in S_1$, because by definition no vertex of $S_1$ is adjacent to a subdivision landmark in $W_m\setminus S_q$. Therefore a unique set contributes only when its corresponding vertex $q_j$ lies in $W_m$.
\end{proof}

\medskip
\noindent Combining the previous lemma over all non-null partitions gives the following global counting bound.
\begin{corollary}\label{coro-3.2}
If $|W_m\cap S_q|=(e_1-1)+a$, then the number of vertices of $S_1$ that can be distinguished through good and unique sets is at most $g+a$.
\end{corollary}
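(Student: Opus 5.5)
The bound follows by summing the contributions permitted by Lemma \ref{lemma-3.4} over the three types of partitions of $S_q$. By definition there are $e_1$ null, $o$ unique and $g$ good partitions, with $e_1+o+g=p-1$. The sets $V_i=M_i\cup\{q_i\}$ for non-null $M_i$ are pairwise disjoint. Every landmark of $W_m$ that is adjacent or close to a vertex of $S_1$, and that can act on $S_1$ in a nonuniform way, lies in one of these sets $V_i$. So it is enough to bound, partition by partition, how many vertices of $S_1$ can be singled out.

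First, the null partitions contribute nothing. If $c_i=0$ then $V_i=\{q_i\}$, and $q_i$ is at distance $1$ from every vertex of $S_p$, so it gives the same coordinate to all of $S_1$. Second, for each good partition, Lemma \ref{lemma-3.4} says $V_i$ singles out at most one vertex of $S_1$. Summing over the $g$ good partitions gives at most $g$ vertices.

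Third, consider the unique partitions. By Lemma \ref{lemma-3.4}, a unique set $V_j$ contributes only if $q_j\in W_m$, and then it contributes at most one vertex of $S_1$, namely the unique $S_p$-endpoint $p_t$ of its subdivided edge. So the contribution of the unique partitions is at most the number of indices $j$ with $c_j=1$ and $q_j\in W_m$.

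The main step is to show that this number is at most $a$. By Lemma \ref{lemma3.1}, at least $e_1-1$ of the landmarks in $S_q$ are needed just to separate the twin class $\{q_i: c_i=0\}$. I would argue that, since $|W_m\cap S_q|=(e_1-1)+a$, at most $a$ landmarks of $S_q$ lie outside this forced set. In particular, at most $a$ of them lie in unique partitions.

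The delicate point is that the forced $e_1-1$ landmarks must come from the null class itself, so they cannot be counted again among the unique ones. This holds because a twin class can only be resolved by selecting all but one of its members: every other landmark gives equal distances to all members of the class. Hence the $e_1-1$ forced landmarks are null-partition vertices, and at most $a$ landmarks remain for unique partitions.

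Adding the three contributions gives $0+g+a=g+a$, which is the bound stated in Corollary \ref{coro-3.2}.
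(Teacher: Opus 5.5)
Your proposal is correct and follows essentially the same argument as the paper. The $g$ good sets contribute at most one vertex each by Lemma~\ref{lemma-3.4}, and unique sets contribute only through landmarks $q_j\in W_m$, of which at most $a$ remain once the $e_1-1$ null-partition vertices forced by the proof of Lemma~\ref{lemma3.1} (not just its bare statement) are set aside; your extra remarks on null sets and on each unique set singling out only its endpoint $p_t$ spell out what the paper's short proof leaves implicit.
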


\begin{proof}
There are $g$ good sets, and each contributes to at most one vertex of $S_1$ by Lemma \ref{lemma-3.4}. Among the $(e_1-1)+a$ vertices of $S_q\cap W_m$, at least $e_1-1$ are needed for null partitions. Therefore at most $a$ vertices of $S_q\cap W_m$ can correspond to non-null unique partitions. Lemma \ref{lemma-3.4} gives the bound $g+a$.
\end{proof}

\medskip
\noindent The following lower bound shows that subdividing only $p-3$ edges cannot achieve a resolving set of size $q-2$.
\begin{theorem}\label{thm-p3-lower}
Let $G$ be obtained from $\Gamma(\mathbb Z_{pq})$, where $q>p$, by subdividing exactly $p-3$ edges. Then $\dim(G)\geq q-1$.
\end{theorem}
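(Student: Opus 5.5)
We argue by contradiction. Suppose $G$ has a resolving set $W_m$ with $|W_m|\leq q-2$. Adding arbitrary vertices to a resolving set keeps it resolving, so we may assume $|W_m|=q-2$. This places us in the setting of Lemmas \ref{lemma3.1}--\ref{lemma-3.4}. Write $|W_m\cap S_q|=(e_1-1)+a$ with $a\geq 0$; Lemma \ref{lemma3.1} guarantees this form. Lemma \ref{lemma-3.2} then gives a set $S_1\subseteq S_p\setminus W_m$ with $|S_1|\geq e_1+a$, none of whose vertices is adjacent to a subdivision landmark.

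The plan is to count how many vertices of $S_1$ can be pairwise separated, and to show that this count falls short. Two vertices of $S_1$ can only be separated by landmarks inside some set $V_i=M_i\cup\{q_i\}$ incident to one of them. Landmarks in $S_p$ are at distance $2$ from both. Landmarks in $S_q$ coming from null partitions are at distance $1$ from both. Subdivision landmarks not adjacent to either vertex are at distance $3$ from both, or from the appropriate analogue of Remark \ref{remar2}. Hence the vertices of $S_1$ must all receive pairwise distinct "signatures" from the non-null sets $V_i$. All but at most one vertex of $S_1$ must have a nonempty signature: by Lemma \ref{lemma-3.3} at most one vertex of $S_1$ is untouched by subdivision, and more generally at most one vertex can be left undistinguished by all sets. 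Corollary \ref{coro-3.2} bounds the number of vertices of $S_1$ that can be singled out through good and unique sets by $g+a$. Therefore
$$
e_1+a\leq |S_1|\leq (g+a)+1,
$$
which gives $e_1\leq g+1$.

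Lemma \ref{lemma-null-count-p3} gives $e_1\geq g+2$ because exactly $p-3$ edges are subdivided. This contradicts $e_1\leq g+1$, so no resolving set of size $q-2$ exists and $\dim(G)\geq q-1$.

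The main obstacle is the middle step: justifying that the vertices of $S_1$ separated "through good and unique sets" are really all but one of $S_1$. That is, every pair in $S_1$ must be split by a landmark in some $V_i$, so that an injectivity or pigeonhole argument applies. I will first tabulate distances in the $M$-subdivision from $q_i$ and from $m_{ij}$ to vertices $p_s$. I expect $d(q_i,p_s)$ to be $1$ if $q_ip_s$ is not subdivided and $2$ if it is, and $d(m_{ij},p_s)$ to be $1$ if $s=j$ and $3$ otherwise. This confirms that only landmarks tied to an edge at $p_s$ are informative. Then I will map each $s\in S_1$, except possibly one, to a distinguishing set as Lemma \ref{lemma-3.4} does, and check that this map is injective. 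With the map injective, Corollary \ref{coro-3.2} closes the count.
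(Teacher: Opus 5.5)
Your outline is the paper's own proof: reduce to $|W_m|=q-2$, take $S_1$ from Lemma \ref{lemma-3.2}, set aside one vertex by Lemma \ref{lemma-3.3}, bound the rest by $g+a$ via Corollary \ref{coro-3.2}, and contradict $e_1\geq g+2$. The gap is the step you flag as the main obstacle, and your plan for verifying it would fail, for two reasons.

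\textbf{The distance table is wrong for subdivision landmarks.} In an $M$-subdivision, for $s\neq j$ one has $d(m_{ij},p_s)=2$ when $q_ip_s$ is not subdivided (path $m_{ij}q_ip_s$), and $3$ when it is. It is not uniformly $3$ as in the barycentric graph. The same slip appears in your claim that non-adjacent subdivision landmarks are at distance $3$ from both vertices. Put $D(p_s)=\{i: q_ip_s\text{ is subdivided}\}$ and $J=\{i: V_i\cap W_m\neq\emptyset\}$. Then the code of $p_s\in S_1$ is determined exactly by the subset $D(p_s)\cap J$:
a landmark $q_i$ reads $2$ or $1$, and a landmark $m_{ij}$ reads $3$ or $2$, according as $i\in D(p_s)$ or not; landmarks in $S_p$ read $2$, since the vertices of $S_p$ share a null neighbour in $S_q$.

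\textbf{Codes are subsets, not single indices, so the injective map need not exist.} Three vertices of $S_1$ with $D(p_s)\cap J$ equal to $\{i_1\}$, $\{i_2\}$, $\{i_1,i_2\}$ are pairwise resolved using only $V_{i_1}$ and $V_{i_2}$. So the bound $g+a$ does not follow from distinctness of codes. Corollary \ref{coro-3.2} is itself proved by this per-set summation, so quoting it does not remove the difficulty.

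\textbf{How to close the argument.} What actually forces the contradiction is the edge budget, and it needs only the weak fact you already have: at most one $p_s\in S_1$ has $D(p_s)\cap J=\emptyset$. Let $t=p-1-e_1$ be the number of non-null indices, $x$ the number of non-null $q_i$ in $W_m$, and $y$ the number of $i\in J$ with $q_i\notin W_m$ (these are necessarily non-null). Count subdivided edges in three disjoint groups:
at least $|S_1|-1$ of them join $S_1$ to $\{q_i:i\in J\}$;
each of the $y$ indices carries a further subdivided edge $q_ip_j$ with $m_{ij}\in W_m$, so $p_j\notin S_1$;
each of the $t-x-y$ non-null indices outside $J$ carries at least one more.
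Moreover $|S_1|\geq e_1+a\geq e_1+x$, because the non-null part of $W_m\cap S_q$ has size $x\leq a$. Hence the number of subdivided edges is at least
\[
(e_1+x-1)+y+(t-x-y)=e_1+t-1=p-2>p-3,
\]
a contradiction. This one count replaces both Corollary \ref{coro-3.2} and Lemma \ref{lemma-null-count-p3}. It also shows more generally that no subdivision of at most $p-3$ edges admits a resolving set of size $q-2$.
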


\begin{proof}
Assume that $W_m$ is a resolving set of $G$ with $|W_m|=q-2$. Write
$$
|W_m\cap S_q|=(e_1-1)+a.
$$
By Lemma \ref{lemma-3.2}, there is a set $S_1\subseteq S_p$ with $|S_1|\geq e_1+a$, disjoint from $W_m$, and not adjacent to any subdivision landmark in $W_m\setminus S_q$. Lemma \ref{lemma-3.3} shows that at most one vertex of $S_1$ is not incident with a subdivided edge. Hence at least $e_1+a-1$ vertices of $S_1$ must be distinguished through good or unique sets. By Corollary \ref{coro-3.2}, at most $g+a$ such vertices can be distinguished. Lemma \ref{lemma-null-count-p3} gives $e_1\geq g+2$, and therefore
$$
e_1+a-1\geq g+a+1>g+a.
$$
This is impossible. Hence no resolving set of cardinality $q-2$ exists, and $\dim(G)\geq q-1$.
\end{proof}

\medskip
\noindent When one more edge is subdivided, the null-good partition inequality weakens by exactly one unit.
\begin{lemma}\label{lemma-null-count-p2}
If exactly $p-2$ edges are subdivided, then $e_1\geq g+1$.
\end{lemma}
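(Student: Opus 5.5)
The plan is to repeat the counting argument from Lemma \ref{lemma-null-count-p3}, now with total $p-2$ instead of $p-3$. Since exactly $p-2$ edges are subdivided, the sizes of the partitions satisfy
$$
\sum_{i=1}^{p-1}c_i=p-2.
$$
Each null partition contributes $0$ to this sum. Each unique partition contributes exactly $1$, and each good partition contributes at least $2$. This gives the inequality $p-2\geq o+2g$.

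Next, substitute the relation $e_1+o+g=p-1$ in the form $o=p-1-e_1-g$. This yields
$$
p-2\geq p-1-e_1-g+2g=p-1-e_1+g,
$$
which rearranges to $e_1\geq g+1$, as required.

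There is no real obstacle. The argument uses only the partition identity $e_1+o+g=p-1$ and the lower bounds on the contributions of unique and good partitions. It needs neither the matching hypothesis nor any metric information.
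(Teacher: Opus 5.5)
Your proposal is correct and follows essentially the same argument as the paper: it starts from $\sum_{i=1}^{p-1} c_i = p-2$ to get $p-2 \geq o+2g$, then substitutes $o = p-1-e_1-g$ to conclude $e_1 \geq g+1$. As you note, the lemma uses no matching hypothesis and no metric information; the paper's proof does not use them either.
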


\begin{proof}
Now $\sum_{i=1}^{p-1}c_i=p-2$. As before,
$$
p-2=\sum_{i=1}^{p-1}c_i\geq o+2g.
$$
Using $o=p-1-e_1-g$, we get
$$
p-2\geq p-1-e_1+g,
$$
which gives $e_1\geq g+1$.
\end{proof}

\medskip
\noindent This lower bound is the threshold estimate needed for matching subdivisions of size $p-2$.
\begin{theorem}\label{theorem-3.2}
Let $G$ be obtained from $\Gamma(\mathbb Z_{pq})$, where $q>p$, by subdividing exactly $p-2$ edges. Then $\dim(G)\geq q-2$.
\end{theorem}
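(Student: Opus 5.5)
The natural route is to mirror the proof of Theorem \ref{thm-p3-lower}, replacing Lemma \ref{lemma-null-count-p3} by Lemma \ref{lemma-null-count-p2}. I will argue by contradiction: suppose $W_m$ resolves $G$ with $|W_m|\leq q-3$. Because the counting lemmas are phrased for size $q-2$, I first adapt them to a general size $|W_m|=k$. I also have to make sure the slack is now one unit smaller. The $p-3$ case ruled out size $q-2$ with a margin of one. The $p-2$ case should rule out size $q-3$ with the same margin.

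\textbf{Step 1: redo Lemma \ref{lemma-3.2} with $|W_m|=q-3$.} I write $|W_m\cap S_q|=(e_1-1)+a$ with $a\geq 0$, which Lemma \ref{lemma3.1} guarantees. Then
$$|W_m\setminus S_q|=q-e_1-a-2.$$
Each vertex of $W_m\setminus S_q$ accounts for at most one vertex of $S_p$, either itself or its unique $S_p$-neighbour. So the complementary set $S_1\subseteq S_p$, consisting of vertices neither selected nor adjacent to a selected subdivision landmark, satisfies $|S_1|\geq e_1+a+1$.

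\textbf{Step 2: apply the remaining lemmas unchanged.} Lemma \ref{lemma-3.3} and Lemma \ref{lemma-3.4} do not depend on the size of $W_m$. Their proofs use only the twin structure and the definition of $S_1$. Hence at most one vertex of $S_1$ is untouched by subdivided edges, and at least $e_1+a$ vertices of $S_1$ must be singled out through good or unique sets. Corollary \ref{coro-3.2} bounds the number that can be singled out by $g+a$. Lemma \ref{lemma-null-count-p2} gives $e_1\geq g+1$, so
$$e_1+a\geq g+a+1>g+a,$$
which is a contradiction. Therefore every resolving set has size at least $q-2$, that is, $\dim(G)\geq q-2$.

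\textbf{Main obstacle.} The delicate point is the accounting behind Corollary \ref{coro-3.2}: making sure the required number of distinguished vertices really is $|S_1|-1$. I will re-check this as follows. Vertices of $S_1$ are pairwise separated only by landmarks at distance $1$ from exactly one of them, in the spirit of Remark \ref{remar2}. Such a landmark is $q_i$ with $q_ip_j$ subdivided, or a subdivision vertex, and subdivision vertices adjacent to $S_1$ are excluded by construction. So all but one vertex of $S_1$ must have an "own" separating landmark in some $V_i$. Two points then need verifying. First, a good set $V_i$ serves at most one vertex of $S_1$. Second, $q_i$ for a unique partition counts only among the $a$ surplus $S_q$-landmarks, since null $q_i$'s are adjacent to all of $S_p$ and separate nothing. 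If this matching-style count holds, the inequality above closes the argument.
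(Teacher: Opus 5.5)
Your Steps 1 and 2 reproduce the paper's own proof of Theorem~\ref{theorem-3.2} line for line: the adapted count $|S_1|\ge e_1+a+1$, then Lemma~\ref{lemma-3.3}, Lemma~\ref{lemma-3.4}, Corollary~\ref{coro-3.2} and $e_1\ge g+1$. The gap is the point you flagged and left open. The bound ``at most $g+a$ vertices of $S_1$ can be distinguished'' is not justified. The principle behind it is false: that all but one vertex of $S_1$ need their own separating set, and that a good set serves at most one vertex.

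Since fewer than $p-1$ edges are subdivided, any two vertices of $S_p$ are at distance $2$. So for $p_j\notin S_1$ and $p_k\in S_1$ we get $d(m_{ij},p_k)=3$ if $q_ip_k$ is subdivided, and $2$ otherwise. Contrary to your remark, a subdivision landmark not adjacent to $S_1$ does separate vertices of $S_1$, at distances $2$ versus $3$.

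Now, with $p-2\ge 6$, subdivide $q_1p_1,q_1p_2,q_1p_5,q_2p_2,q_2p_3,q_2p_6$, placing any remaining subdivided edges away from $q_1,q_2,p_1,p_2,p_3$. Then $M_1$ and $M_2$ are good. Put $m_{15},m_{26}$ into $W_m$. No $S_q$-landmark is used, and $p_1,p_2,p_3$ may lie in $S_1$. The pairs $\bigl(d(m_{15},\cdot),d(m_{26},\cdot)\bigr)$ are $(3,2),(3,3),(2,3)$ on $p_1,p_2,p_3$ and $(2,2)$ on a vertex with no subdivided edge. So two good sets separate three vertices of $S_1$. In general $k$ good sets can realise more than $k$ distinct patterns, and comparing $e_1+a$ with $g+a$ gives no contradiction. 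The per-$q$ data $e_1,o,g$ do not record how subdivided edges are shared among vertices of $S_1$, so a different count is needed.

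A count that works uses the components of the graph $H$ on $S_q\cup S_p$ whose edges are the subdivided edges. Let $c$ be the number of components of $H$ with at least one edge, and let $P_0\subseteq S_p$ and $Q_0\subseteq S_q$ be the vertices isolated in $H$. A component with $v$ vertices has at least $v-1$ edges, so $(q-1-|P_0|)+(p-1-|Q_0|)-c\le p-2$. Say a landmark lies in a component $C$ if it is a vertex of $C$ or a subdivision vertex on an edge of $C$.

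By the distances above, a landmark $x$ not in $C$ is at the same distance from every $S_p$-vertex of $C$ and from every vertex of $P_0\setminus\{x\}$: distance $1$ if $x\in S_q$, and $2$ otherwise. The symmetric statement holds on the $S_q$-side, and likewise for vertices of two different components. Hence a resolving set $W$:
\begin{itemize}
\item omits at most one vertex of each twin class $P_0$ and $Q_0$;
\item leaves at most one component without a landmark;
\item contains $P_0\cup Q_0$ if some component has no landmark.
\end{itemize}
Landmarks in distinct components are distinct and lie outside $P_0\cup Q_0$. So in every case
\[
|W|\ \ge\ (|P_0|-1)+(|Q_0|-1)+c\ \ge\ (p+q-4)-(p-2)\ =\ q-2 .
\]
The same count gives $\dim(G)\ge p+q-4-e$ whenever $e\le p-2$ edges are subdivided, which also recovers Theorem~\ref{thm-p3-lower}.
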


\begin{proof}
Assume that $W_m$ is a resolving set of $G$ with $|W_m|=q-3$. Write $|W_m\cap S_q|=(e_1-1)+a$. As in Lemma \ref{lemma-3.2}, the number of vertices of $S_p$ neither in $W_m$ nor adjacent to a subdivision vertex of $W_m\setminus S_q$ is at least
$$
(q-1)-\bigl((q-3)-((e_1-1)+a)\bigr)=e_1+a+1.
$$
Let $S_1$ be this set. By Lemma \ref{lemma-3.3}, at least $e_1+a$ vertices of $S_1$ are incident with subdivided edges. By Lemma \ref{lemma-3.4} and Corollary \ref{coro-3.2}, at most $g+a$ of them can be distinguished through good and unique partitions. Since Lemma \ref{lemma-null-count-p2} gives $e_1\geq g+1$, we obtain
$$
e_1+a\geq g+a+1>g+a,
$$
which is impossible. Thus no resolving set of cardinality $q-3$ exists, and $\dim(G)\geq q-2$.
\end{proof}

\medskip
\noindent The next result shows that a matching subdivision of size $p-2$ actually attains the lower bound.
\begin{theorem}\label{theorem-3.3}
Let $G$ be obtained from $\Gamma(\mathbb Z_{pq})$, where $q>p$, by subdividing the edges of a matching of size $p-2$. Then $\dim(G)=q-2$.
\end{theorem}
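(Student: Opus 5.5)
By Theorem \ref{theorem-3.2} we already have $\dim(G)\geq q-2$, since a matching of size $p-2$ is in particular a set of $p-2$ subdivided edges. So it remains to exhibit a resolving set of cardinality $q-2$. Up to relabelling, the subdivided matching is $\{q_ip_i:1\leq i\leq p-2\}$, with inserted vertices $m_i:=m_{ii}$. In this labelling $q_{p-1}$ is the unique null vertex of $S_q$, and $p_{p-1},p_p,\ldots,p_{q-1}$ are the $q-p+1$ vertices of $S_p$ not covered by the matching.

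The candidate set is
$$
W=\{p_p,p_{p+1},\ldots,p_{q-1}\}\cup\{m_1,\ldots,m_{p-2}\},
$$
which has $(q-p)+(p-2)=q-2$ elements. The choice is guided by the twin structure. The uncovered vertices $p_{p-1},\ldots,p_{q-1}$ all have open neighbourhood $S_q$, so all but one of them must be landmarks, as in the argument of Lemma \ref{lemma-3.3}. The subdivision vertices are then used to separate the $S_q$-vertices and the matched $S_p$-vertices.

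The first step is to write down the distance table of $G$, in the spirit of Remark \ref{remark1}. For $i\neq j$ in $\{1,\ldots,p-2\}$ and any $k\neq i$:
\begin{itemize}
\item $d(m_i,q_i)=d(m_i,p_i)=1$;
\item $d(m_i,q_j)=2$ (via $p_i$) and $d(m_i,q_{p-1})=2$;
\item $d(m_i,p_k)=2$ (via $q_i$);
\item $d(m_i,m_j)=3$;
\item $d(q_i,p_i)=2$ and $d(p_a,p_b)=2$ for all $a\neq b$;
\item $d(q_j,p_k)=1$ whenever the edge $q_jp_k$ is not subdivided.
\end{itemize}
These follow from short explicit paths together with parity, since $G$ is bipartite after placing each $m_i$ on the $S_p$-side.

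The second step computes the codes of all non-landmarks, with $\ell$ ranging over $p,\ldots,q-1$.
\begin{itemize}
\item $p_{p-1}$ is at distance $2$ from every landmark.
\item $p_i$ ($i\leq p-2$) is at distance $2$ from every $p_\ell$, distance $1$ from $m_i$, and distance $2$ from the other $m_j$.
\item $q_{p-1}$ is at distance $1$ from every $p_\ell$ and distance $2$ from every $m_j$.
\item $q_i$ ($i\leq p-2$) is at distance $1$ from every $p_\ell$, distance $1$ from $m_i$, and distance $2$ from the other $m_j$.
\end{itemize}
The $S_p$-coordinates (all $2$ or all $1$) separate the $S_p$-side vertices from the $S_q$-side vertices; this uses only that $q-p\geq 1$. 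Within each side, the position of the unique coordinate equal to $1$ among the $m$-coordinates (or its absence) separates all vertices. Every $m_i$ lies in $W$, so no subdivision vertex needs to be resolved.

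The main obstacle is making sure the distance table is correct, in particular $d(q_i,p_i)=2$ and $d(m_i,q_j)=2$ for $j\neq i$, because the shortest paths change when an edge is subdivided. I would verify these values using the bipartite parity argument together with explicit paths of the stated length. The degenerate case $p=3$, where the matching is a single edge, fits the same construction, since the index range $1,\ldots,p-2$ is nonempty. Combining the construction with Theorem \ref{theorem-3.2} gives $\dim(G)=q-2$.
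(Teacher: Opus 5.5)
Your proof is correct and is essentially the paper's own: the lower bound comes from Theorem~\ref{theorem-3.2}, and your $W$ is exactly the paper's set $W_m=\{m_1,\ldots,m_{p-2}\}\cup(T_p\setminus\{p^*\})$ with $p^*=p_{p-1}$. Your table of codes is a more systematic version of the paper's pairwise check, and it also covers pairs such as $q_i$ versus a matched $p_j$ with $j\neq i$, which the paper leaves implicit.

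One correction: $G$ is not bipartite, since $q_i\,m_i\,p_i\,q_{p-1}\,p_{q-1}\,q_i$ is a $5$-cycle, so drop the parity justification. It is not needed anyway. Every distance you actually use is $1$ or $2$ and follows directly from (non-)adjacency plus an explicit common neighbour. The value $d(m_i,m_j)=3$ is never used, because both vertices are landmarks.
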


\begin{proof}
The lower bound is Theorem \ref{theorem-3.2}. Let the subdivided matching edges be
$$
q_1p_1,q_2p_2,\ldots,q_{p-2}p_{p-2},
$$
and let $m_i$ be the subdivision vertex inserted on $q_ip_i$. Put
$$
T_p=\{p_i\in S_p:p_i\text{ is not incident with a subdivided edge}\}.
$$
Then $|T_p|=(q-1)-(p-2)=q-p+1$. Choose $p^*\in T_p$ and define
$$
W_m=\{m_1,m_2,\ldots,m_{p-2}\}\cup (T_p\setminus\{p^*\}).
$$
Thus
$$
|W_m|=(p-2)+(q-p)=q-2.
$$
We verify that $W_m$ resolves $G$. If $q_i$ and $q_j$ are two distinct vertices of $S_q$ with $1\leq i,j\leq p-2$, then $m_i$ separates them because $d(q_i,m_i)=1$ and $d(q_j,m_i)=2$. If one of them is the unmatched vertex $q_{p-1}$, the same argument with $m_i$ still separates the pair. If $p_i$ and $p_j$ are two matched vertices of $S_p$, then $m_i$ separates them, since $d(p_i,m_i)=1$ and $d(p_j,m_i)=2$. If $p_t\in T_p\setminus\{p^*\}$, then $p_t\in W_m$ is separated from every other vertex by its zero coordinate; the remaining vertex $p^*$ is separated from each matched $p_i$ by $m_i$, because $d(p_i,m_i)=1$ and $d(p^*,m_i)=2$.

It remains only to compare vertices from different types. A vertex $q_i$ and an unmatched vertex $p_t\in T_p$ are separated as follows. If $q_i$ is incident with a subdivided matching edge, then $m_i$ gives $d(q_i,m_i)=1$ and $d(p_t,m_i)=2$. If $q_i=q_{p-1}$ is the unmatched vertex of $S_q$, then a landmark $p_s\in T_p\setminus\{p^*\}$ separates the pair, either by a zero coordinate when $s=t$, or by $d(q_i,p_s)=1$ and $d(p_t,p_s)=2$ when $s\neq t$. A vertex $q_i$ and the matched vertex $p_i$ are not separated by the common subdivision landmark $m_i$ alone, since both are adjacent to it; however, a landmark $p_t\in T_p\setminus\{p^*\}$ gives $d(q_i,p_t)=1$ and $d(p_i,p_t)=2$. Finally, every subdivision vertex $m_i$ belongs to $W_m$, so its zero coordinate separates it from each endpoint and from every other subdivision vertex. Thus every two vertices have distinct metric codes with respect to $W_m$. Hence $W_m$ is resolving. Therefore $\dim(G)\leq q-2$, and the theorem follows.
\end{proof}

\medskip
\noindent The following general formula explains how each added matching subdivision edge lowers the metric dimension by one.
\begin{theorem}\label{thm-general-matching}
Let $G_r$ be obtained from $\Gamma(\mathbb Z_{pq})$ by subdividing the edges of a matching of size $r$, where $0\leq r\leq p-2$. Then
$$
\dim(G_r)=p+q-r-4.
$$
\end{theorem}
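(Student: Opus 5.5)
The proof proceeds by matching upper and lower bounds. Label the matching edges $q_1p_1,\ldots,q_rp_r$ and let $m_i$ be the vertex inserted on $q_ip_i$. The remaining vertices fall into two sets:
$$A=\{q_{r+1},\ldots,q_{p-1}\}\subseteq S_q,\qquad B=\{p_{r+1},\ldots,p_{q-1}\}\subseteq S_p,$$
so $|A|=p-1-r\geq 1$ (because $r\leq p-2$) and $|B|=q-1-r\geq 2$ (because $q>p$). We also set $T_i=\{q_i,m_i,p_i\}$ for $1\leq i\leq r$. The sets $A$, $B$ and $T_1,\ldots,T_r$ partition $V(G_r)$. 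First I record the distances, which follow as in Remark \ref{remark1}. We have $d(q_i,p_j)=1$ unless $i=j\leq r$, in which case $d(q_i,p_i)=2$ via $m_i$. Two vertices of the same side are at distance $2$. Finally $d(m_i,q_i)=d(m_i,p_i)=1$, $d(m_i,x)=2$ for every other original vertex $x$, and $d(m_i,m_j)=3$ for $i\neq j$.

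\textbf{Upper bound.} Following the construction of Theorem \ref{theorem-3.3}, choose $a^*\in A$ and $b^*\in B$, and put
$$W=\{m_1,\ldots,m_r\}\cup(A\setminus\{a^*\})\cup(B\setminus\{b^*\}).$$
Then $|W|=r+(p-2-r)+(q-2-r)=p+q-r-4$. The checks go through the types of pairs.
\begin{itemize}
\item The landmark $m_i$ separates $q_i$ and $p_i$ from every vertex outside $T_i$, since it is at distance $1$ from them and $2$ or $3$ from all others.
\item The pair $q_i,p_i$ is separated by any landmark $b\in B\setminus\{b^*\}$, which exists since $|B|\geq 2$, because $d(q_i,b)=1$ and $d(p_i,b)=2$.
\item The vertex $a^*$ is separated from $b^*$ in the same way.
\item The vertex $a^*$ is separated from each matched $q_i$ or $p_i$ by $m_i$.
\item Vertices of $W$ are separated from all others by their zero coordinate.
\end{itemize}
This parallels the verification in Theorem \ref{theorem-3.3}. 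The case $r=0$ recovers the complete bipartite value $p+q-4$.

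\textbf{Lower bound.} Let $W$ be any resolving set. The vertices of $A$ are pairwise twins, with common open neighborhood $S_p$, and likewise the vertices of $B$, with common neighborhood $S_q$. Arguing as in Lemmas \ref{lemma3.1} and \ref{lemma-3.3}, $W$ contains all but at most one vertex of $A$ and all but at most one vertex of $B$.

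\textbf{Key step.} I will show that at most one triple $T_i$ misses $W$. Moreover, if some $T_i$ misses $W$, then $A\subseteq W$ and $B\subseteq W$.

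For the first claim, suppose $T_i\cap W=T_j\cap W=\emptyset$ with $i\neq j$. Every remaining landmark is at equal distance from $q_i$ and $q_j$: landmarks $p_k$ with $k\neq i,j$ give distance $1$, landmarks $m_k$ give $2$, and landmarks in $S_q$ give $2$. Hence $q_i$ and $q_j$ have the same code, a contradiction.

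For the second claim, suppose $T_i$ misses $W$ and some $a\in A$ is omitted. Then $q_i$ and $a$ receive identical codes, by the same distance list, since the only landmarks that could separate them lie in $T_i$. The symmetric argument with $p_i$ and an omitted $b\in B$ forces $B\subseteq W$.

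\textbf{Counting.} If every $T_i$ meets $W$, then $|W|\geq (|A|-1)+(|B|-1)+r=p+q-r-4$. If exactly one $T_i$ misses $W$, then $|W|\geq |A|+|B|+(r-1)=p+q-r-3$, which is even larger. So in either case $\dim(G_r)\geq p+q-r-4$.

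I expect the main obstacle to be the key step: it requires the twin-type distance comparison between $q_i$ and an unmatched vertex, including the care needed to confirm that no vertex of $S_p$ other than $p_i$ distinguishes them. Once that comparison is in place, the rest reduces to the counting above.
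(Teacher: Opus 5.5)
Your proof is correct, and your lower bound is the paper's argument. You use the twin classes of unmatched vertices (the paper's $Q_0,P_0$), the fact that two missed triples leave $q_i,q_j$ unresolved, the fact that a missed triple plus an omitted unmatched vertex leaves $q_i$ (resp.\ $p_i$) twinned with it, and the same two-case count.

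The genuine difference is the upper-bound witness. The paper uses only original vertices:
\[
W=\{q_1,\dots,q_{p-2}\}\cup\{p_{r+1},\dots,p_{q-2}\}.
\]
There the subdivision vertices must be told apart from the matched $p_j$ and from $p_{q-1}$ purely by the pattern of $1$'s and $2$'s on the $q$-landmarks. Separating $m_i$ from $p_{q-1}$ needs a $q$-landmark other than $q_i$. Separating $m_i$ from $p_j$ ($j\neq i$) needs one outside $\{q_i,q_j\}$. That is available for $p\geq 5$ but not for $p=3$, $r=1$. In that case $m_1$ and $p_{q-1}$ both have code $(1,2,\dots,2)$ with respect to $\{q_1,p_2,\dots,p_{q-2}\}$, so the paper's set is not resolving there.

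Your set $\{m_1,\dots,m_r\}\cup(A\setminus\{a^*\})\cup(B\setminus\{b^*\})$, modeled on Theorem \ref{theorem-3.3}, avoids this. Each $m_i$ isolates the triple $T_i$. A single landmark of $B\setminus\{b^*\}$, which is always nonempty since $|B|\geq q-p+1\geq 3$, splits $q_i$ from $p_i$ and $a^*$ from $b^*$. So your verification is shorter and valid for every admissible $p,q,r$. The price is that some landmarks sit on inserted vertices rather than on original ones; the paper's choice uses only original vertices but does not cover every case.
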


\begin{proof}
Write $a=p-1$ and $b=q-1$, so $\Gamma(\mathbb Z_{pq})\cong K_{a,b}$. Without loss of generality, suppose that the subdivided matching edges are $q_ip_i$ for $1\leq i\leq r$, and let $m_i$ be the subdivision vertex on $q_ip_i$.

First we construct a resolving set. Define
$$
W=\{q_1,q_2,\ldots,q_{a-1}\}\cup\{p_{r+1},p_{r+2},\ldots,p_{b-1}\}.
$$
The first set has $a-1=p-2$ vertices and the second has $b-r-1=q-r-2$ vertices. Therefore
$$
|W|=p+q-r-4.
$$
We verify that $W$ resolves $G_r$. Two vertices of $S_q$ are separated by a selected $q$-vertex unless one of them is the omitted vertex $q_a$; in that remaining case, a selected vertex $p_{r+1}$, when needed, or one of the selected $q$-vertices separates the pair by the usual $0$ and $2$ coordinates. The vertices $p_{r+1},\ldots,p_{b-1}$ similarly separate all vertices of $S_p$ except the single omitted vertex $p_b$, and any matched vertex $p_i$ is separated from $p_b$ by $q_i$, since $d(q_i,p_i)=2$ and $d(q_i,p_b)=1$. A pair consisting of one vertex from $S_q$ and one from $S_p$ is separated either by a selected vertex of $S_q$ or by a selected vertex of $S_p$; the subdivided pair $q_i,p_i$ is separated by $q_i$, because the corresponding distances are $0$ and $2$. Finally, $m_i$ is separated from $q_i$ and $p_i$ by $q_i$, and if $m_i\neq m_j$, then $q_i$ separates them because $d(q_i,m_i)=1$ while $d(q_i,m_j)=2$. Hence $W$ is resolving, and
$$
\dim(G_r)\leq p+q-r-4.
$$

For the lower bound, let $R$ be any resolving set. The unmatched vertices
$$
Q_0=\{q_{r+1},q_{r+2},\ldots,q_a\}
$$
form a twin class, because they are adjacent to exactly the same vertices of $S_p$ and to no subdivision vertex. Therefore
$$
|R\cap Q_0|\geq |Q_0|-1=a-r-1.
$$
Similarly,
$$
P_0=\{p_{r+1},p_{r+2},\ldots,p_b\}
$$
is a twin class, so
$$
|R\cap P_0|\geq |P_0|-1=b-r-1.
$$
For $1\leq i\leq r$, put $C_i=\{q_i,p_i,m_i\}$. If $R\cap C_i=\emptyset$ and some vertex $q_0\in Q_0$ is not in $R$, then $q_i$ and $q_0$ have the same distance to every landmark outside $C_i$: they are both at distance $1$ from all vertices of $P_0$, both at distance $2$ from all vertices of $Q_0\setminus\{q_0\}$, and have equal distances to all other triples $C_j$ with $j\neq i$. This contradicts the resolving property. Hence, whenever a triple $C_i$ is missed, all vertices of $Q_0$ must be contained in $R$. The same argument with the two partite sets interchanged shows that a missed $C_i$ also forces all vertices of $P_0$ to be contained in $R$.

Moreover, two different triples cannot both be missed. Indeed, if $R\cap C_i=R\cap C_j=\emptyset$ with $i\neq j$, then the vertices $q_i$ and $q_j$ have the same distances to every landmark outside $C_i\cup C_j$, while there are no landmarks inside those two triples. Thus $R$ would not resolve $q_i$ and $q_j$.

Therefore at least $r-1$ of the triples $C_i$ meet $R$. If all $r$ triples meet $R$, then together with the two twin-class bounds we get
$$
|R|\geq (a-r-1)+(b-r-1)+r=a+b-r-2.
$$
If exactly one triple is missed, then all vertices of $Q_0$ and all vertices of $P_0$ are contained in $R$, giving two additional vertices beyond the twin-class lower bounds, and hence
$$
|R|\geq (a-r-1)+(b-r-1)+(r-1)+2=a+b-r-1>a+b-r-2.
$$
In all cases $|R|\geq a+b-r-2=p+q-r-4$. Combining this with the upper bound proves
$$
\dim(G_r)=p+q-r-4.
$$
\end{proof}

\medskip
\noindent The case $r=p-3$ gives the exact value corresponding to the lower bound in Theorem \ref{thm-p3-lower}.
\begin{corollary}\label{cor-p3-exact}
If $G$ is obtained from $\Gamma(\mathbb Z_{pq})$ by subdividing the edges of a matching of size $p-3$, then $\dim(G)=q-1$.
\end{corollary}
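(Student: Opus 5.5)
This is the specialization $r=p-3$ of Theorem \ref{thm-general-matching}. That theorem is stated for every matching size $0\leq r\leq p-2$, and $p-3$ lies in this range because $p$ is an odd prime, so $p\geq 3$ and $r=p-3\geq 0$. Substituting $r=p-3$ into the formula $\dim(G_r)=p+q-r-4$ gives
$$
\dim(G)=p+q-(p-3)-4=q-1,
$$
which is the claim.

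For a proof that ties back to Section \ref{sec-m-subdivision}, I will also get the two bounds separately. A matching of size $p-3$ is a set of exactly $p-3$ subdivided edges, so Theorem \ref{thm-p3-lower} gives $\dim(G)\geq q-1$ directly. For the upper bound I will use the explicit set $W$ from the proof of Theorem \ref{thm-general-matching}, with $r=p-3$, $a=p-1$ and $b=q-1$:
$$
W=\{q_1,\ldots,q_{p-2}\}\cup\{p_{p-2},\ldots,p_{q-2}\}.
$$
Its size is $(p-2)+(q-p+1)=q-1$, and it is resolving by the verification already carried out in that proof.

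The one point to check is the degenerate case $p=3$. Then $r=0$, no edge is subdivided, and $G=K_{2,q-1}$. The formula gives $q-1$, which matches the usual twin-class count $(2-1)+(q-1-1)=q-1$ for a complete bipartite graph. The general theorem already covers $r=0$, so this needs no separate argument. There is no real obstacle here; the result is just a direct substitution.
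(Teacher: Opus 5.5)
Your proposal is correct and matches the paper's proof, which simply substitutes $r=p-3$ into Theorem \ref{thm-general-matching} to get $p+q-(p-3)-4=q-1$. The extra cross-check is consistent but not needed: the lower bound from Theorem \ref{thm-p3-lower}, the explicit set $W$ of size $q-1$, and the remark that $p=3$ gives $r=0$ and $K_{2,q-1}$.
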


\begin{proof}
Put $r=p-3$ in Theorem \ref{thm-general-matching}. Then
$$
\dim(G)=p+q-(p-3)-4=q-1.
$$
\end{proof}

\medskip
\noindent The following numerical example shows the change from $q-1$ to $q-2$ when the matching size increases by one.
\begin{example}\label{ex-matching-z55}
For $p=5$ and $q=11$, the graph $\Gamma(\mathbb Z_{55})$ is $K_{4,10}$. If a matching of size $r=3=p-2$ is subdivided, then Theorem \ref{thm-general-matching} gives
$$
\dim(G_3)=5+11-3-4=9=q-2.
$$
If a matching of size $r=2=p-3$ is subdivided, then
$$
\dim(G_2)=5+11-2-4=10=q-1.
$$
Thus the third matching subdivision edge is exactly the edge that lowers the metric dimension from $q-1$ to $q-2$.
\end{example}

\section{Structural implications and applications}\label{section 4}

The preceding results show that barycentric subdivision and matching subdivision can lead to the same value of metric dimension, but with very different numbers of inserted vertices. In the barycentric subdivision, every edge of $\Gamma(\mathbb Z_{pq})$ is subdivided, producing $(p-1)(q-1)$ new vertices. By contrast, Theorem \ref{theorem-3.3} shows that subdividing a matching of size $p-2$ already gives metric dimension $q-2$.

\begin{figure}[H]
\centering
\begin{minipage}{0.42\textwidth}
\centering
\resizebox{\linewidth}{!}{
\begin{tikzpicture}[every node/.style={font=\scriptsize}]
\node[circle,draw,fill=orange!20,inner sep=1pt] (q1) at (-5,3) {$q_1$};
\node[circle,draw,fill=orange!20,inner sep=1pt] (q2) at (-5,-3) {$q_2$};
\node[circle,draw,fill=orange!20,inner sep=1pt] (q3) at (5,3) {$q_3$};
\node[circle,draw,fill=orange!20,inner sep=1pt] (q4) at (5,-3) {$q_4$};
\foreach \i/\y in {1/4.5,2/3.5,3/2.5,4/1.5,5/0.5,6/-0.5,7/-1.5,8/-2.5,9/-3.5,10/-4.5}{
\node[circle,draw,fill=red!30,inner sep=1pt] (p\i) at (0,\y) {$p_{\i}$};
}
\foreach \j in {1,...,10}{
\draw[black!60] (q1)--(p\j);
\draw[black!60] (q2)--(p\j);
\draw[black!60] (q3)--(p\j);
\draw[black!60] (q4)--(p\j);
}
\end{tikzpicture}
}
\end{minipage}
\hfill
\begin{minipage}{0.42\textwidth}
\centering
\resizebox{\linewidth}{!}{
\begin{tikzpicture}[every node/.style={font=\scriptsize}]
\node[circle,draw,fill=blue!20,inner sep=1pt] (q1) at (-5,3) {$q_1$};
\node[circle,draw,fill=blue!20,inner sep=1pt] (q2) at (-5,-3) {$q_2$};
\node[circle,draw,fill=blue!20,inner sep=1pt] (q3) at (5,3) {$q_3$};
\node[circle,draw,fill=blue!20,inner sep=1pt] (q4) at (5,-3) {$q_4$};
\foreach \i/\y in {1/4.5,2/3.5,3/2.5,4/1.5,5/0.5,6/-0.5,7/-1.5,8/-2.5,9/-3.5,10/-4.5}{
\node[circle,draw,fill=red!30,inner sep=1pt] (p\i) at (0,\y) {$p_{\i}$};
}
\node[circle,draw,fill=yellow!35,inner sep=1pt] (m1) at (-2.5,3.75) {$m_1$};
\node[circle,draw,fill=yellow!35,inner sep=1pt] (m2) at (-2.5,0.25) {$m_2$};
\node[circle,draw,fill=yellow!35,inner sep=1pt] (m3) at (2.5,2.75) {$m_3$};
\draw[thick] (q1)--(m1)--(p1);
\draw[thick] (q2)--(m2)--(p2);
\draw[thick] (q3)--(m3)--(p3);
\foreach \j in {1,...,10}{
\ifnum\j=1\else \draw[gray!60] (q1)--(p\j);\fi
\ifnum\j=2\else \draw[gray!60] (q2)--(p\j);\fi
\ifnum\j=3\else \draw[gray!60] (q3)--(p\j);\fi
\draw[black!60] (q4)--(p\j);
}
\end{tikzpicture}
}
\end{minipage}
\caption{The graph $\Gamma(\mathbb Z_{55})$ and a matching subdivision of size $r=p-2$.}
\label{fig:matching-z55}
\end{figure}

\begin{table}[H]
\centering
\caption{Comparison among $\Gamma(\mathbb Z_{pq})$, matching subdivisions, and barycentric subdivisions.}
\label{tab:comparison}
\begin{tabular}{|>{\centering\arraybackslash}p{0.28\textwidth}|>{\centering\arraybackslash}p{0.19\textwidth}|>{\centering\arraybackslash}p{0.23\textwidth}|>{\centering\arraybackslash}p{0.18\textwidth}|}
\hline
Graph & Condition & Number of subdivision vertices & Metric dimension \\
\hline
$\Gamma(\mathbb Z_{pq})$ & $q>p$ & $0$ & $p+q-4$ \\
\hline
Matching subdivision of size $r$ & $0\leq r\leq p-2$ & $r$ & $p+q-r-4$ \\
\hline
Matching subdivision & $r=p-2$ & $p-2$ & $q-2$ \\
\hline
Barycentric subdivision & $q\geq 2p-1$ & $(p-1)(q-1)$ & $q-2$ \\
\hline
Barycentric subdivision & $q=2p-3$ & $(p-1)(q-1)$ & $q-1$ \\
\hline
Barycentric subdivision & $p+1<q<2p-3$ & $(p-1)(q-1)$ & $>q-2$ \\
\hline
\end{tabular}
\end{table}

\noindent The next proposition interprets the exact formula as an efficiency statement for inserted subdivision vertices.
\begin{proposition}\label{prop-efficiency}
Let $G_r$ be the matching subdivision of $\Gamma(\mathbb Z_{pq})$ by a matching of size $r$, where $0\leq r\leq p-2$. The reduction in metric dimension per inserted subdivision vertex is equal to $1$ for every $r\geq 1$.
\end{proposition}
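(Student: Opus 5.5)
The plan is to read the proposition as a direct consequence of Theorem \ref{thm-general-matching}, once ``reduction per inserted vertex'' is formalised. For $r\geq 1$ we compare $G_r$ with the unsubdivided graph $G_0=\Gamma(\mathbb Z_{pq})$ and measure
$$
\rho(r)=\frac{\dim(G_0)-\dim(G_r)}{r},
$$
since a matching subdivision of size $r$ inserts exactly $r$ new vertices $m_1,\dots,m_r$, one per matching edge.

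First, Theorem \ref{thm-general-matching} with $r=0$ gives $\dim(G_0)=p+q-4$. This agrees with the known value $\dim(K_{a,b})=a+b-2$ for $a=p-1$ and $b=q-1$, and with the first row of Table \ref{tab:comparison}. Next, Theorem \ref{thm-general-matching} applied to a general $r$ with $1\leq r\leq p-2$ gives $\dim(G_r)=p+q-r-4$. Subtracting,
$$
\dim(G_0)-\dim(G_r)=r,
$$
so $\rho(r)=1$ for every $1\leq r\leq p-2$.

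It is also worth recording the marginal version: $\dim(G_{r-1})-\dim(G_r)=1$ for $1\leq r\leq p-2$. This shows that each successive matching edge, and not just the average over all of them, lowers the dimension by exactly one, as Example \ref{ex-matching-z55} illustrates. One point needs a remark here. $G_{r-1}$ should be understood as the subdivision by any sub-matching of size $r-1$. This is legitimate because Theorem \ref{thm-general-matching} shows the value depends only on the size of the matching, not on which edges are chosen. That independence follows from the symmetry of $K_{p-1,q-1}$: every matching of a given size is mapped to every other by an automorphism.

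There is no real obstacle, since all the content is in Theorem \ref{thm-general-matching}. The one care point is the definitional one. The statement says ``for every $r\geq 1$'' because at $r=0$ the ratio is undefined, and the range is capped at $r\leq p-2$ because that is where the formula was proved. I would state both restrictions explicitly and not claim anything for $r=p-1$, a perfect matching of $S_q$, where the twin-class argument in the lower bound changes because $Q_0$ becomes empty.
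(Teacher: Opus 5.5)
Your argument is correct and is essentially the paper's proof: both take $\dim(\Gamma(\mathbb Z_{pq}))=p+q-4$ and $\dim(G_r)=p+q-r-4$, subtract to get a reduction of $r$, and divide by the $r$ inserted vertices. You obtain the first value from Theorem~\ref{thm-general-matching} with $r=0$ and the paper uses the known value for $K_{p-1,q-1}$; your extra marginal identity $\dim(G_{r-1})-\dim(G_r)=1$ and the automorphism remark are correct but not needed for the proposition.
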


\begin{proof}
The original graph $\Gamma(\mathbb Z_{pq})\cong K_{p-1,q-1}$ has metric dimension $p+q-4$. By Theorem \ref{thm-general-matching}, $\dim(G_r)=p+q-r-4$. Hence the reduction is
$$
(p+q-4)-(p+q-r-4)=r.
$$
Dividing by the $r$ inserted subdivision vertices gives efficiency $1$.
\end{proof}

\noindent The following consequence compares the subdivision cost of the barycentric construction with the matching construction.
\begin{corollary}\label{cor-barycentric-efficiency}
For $q\geq 2p-1$, the barycentric subdivision and the matching subdivision of size $p-2$ have the same metric dimension $q-2$, but the matching subdivision uses fewer inserted vertices. The ratio of the numbers of inserted vertices is
$$
\frac{(p-1)(q-1)}{p-2}.
$$
\end{corollary}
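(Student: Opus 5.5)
The statement combines two earlier results, so the plan is to line them up and then count. For the metric dimension, I take Proposition \ref{ET}(1): under the hypothesis $q\geq 2p-1$ it gives $\dim(BS(\Gamma(\mathbb Z_{pq})))=q-2$. For the matching subdivision of size $p-2$, I use Theorem \ref{theorem-3.3}, or equivalently Theorem \ref{thm-general-matching} with $r=p-2$, which gives $p+q-(p-2)-4=q-2$. This establishes the claim that the two values agree.

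Next I count the inserted vertices. In the barycentric subdivision, every edge of $\Gamma(\mathbb Z_{pq})\cong K_{p-1,q-1}$ receives exactly one new vertex $u_i^j$, so $|U|=(p-1)(q-1)$. The matching subdivision inserts exactly one vertex $m_i$ per matching edge, so it inserts $p-2$ vertices; this is positive because $p\geq 3$. Taking the quotient gives the stated ratio $\frac{(p-1)(q-1)}{p-2}$.

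It remains to show that the matching construction uses strictly fewer vertices, that is, $(p-1)(q-1)>p-2$. This follows from $p-1>p-2\geq 1$ and $q-1\geq 1$. Equivalently, the ratio exceeds $1$, and in fact it exceeds $q-1$.

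There is no real obstacle here. The only point needing care is that the two dimension results are quoted under compatible hypotheses. Theorem \ref{theorem-3.3} requires only $q>p$, which is implied by $q\geq 2p-1$ once $p\geq 3$, since $2p-1>p$. So both values are available simultaneously in the stated range.
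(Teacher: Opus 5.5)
Your proposal is correct and follows the paper's own argument. You quote Proposition~\ref{ET}(1) and Theorem~\ref{theorem-3.3} for the common value $q-2$, count $(p-1)(q-1)$ versus $p-2$ inserted vertices, and take the quotient; your explicit checks that $(p-1)(q-1)>p-2$ and that $q\geq 2p-1$ implies $q>p$ make precise what the paper's short proof leaves implicit.
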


\begin{proof}
The barycentric subdivision inserts one vertex on every edge of $K_{p-1,q-1}$, hence inserts $(p-1)(q-1)$ vertices. The matching subdivision in Theorem \ref{theorem-3.3} inserts only $p-2$ vertices. Both graphs have metric dimension $q-2$, so the displayed ratio measures the difference in subdivision cost.
\end{proof}

\subsection{Network localization}

In network localization problems \cite{kh}, a resolving set represents a collection of landmark nodes used to identify the location of every node by its vector of distances to the landmarks. The metric dimension is therefore the minimum number of landmarks needed for unique localization.

For the zero-divisor graph $\Gamma(\mathbb Z_{pq})\cong K_{p-1,q-1}$, the metric dimension is $p+q-4$ \cite{chart}. The matching subdivision model gives a controlled way to reduce this number. By Theorem \ref{thm-general-matching}, subdividing a matching of size $r$ changes the metric dimension to $p+q-r-4$. Thus every matching subdivision edge contributes exactly one unit of metric-dimension reduction until $r=p-2$. At that point the metric dimension becomes $q-2$, which is the same value obtained by the barycentric subdivision under $q\geq 2p-1$, but it is achieved with only $p-2$ inserted vertices instead of $(p-1)(q-1)$ inserted vertices.

This comparison is useful in localization-based network design. If subdivision vertices model intermediate reference points, relay stations, or auxiliary sensors, then the matching subdivision strategy gives the same resolving strength as barycentric subdivision in the large-prime range, while requiring a much smaller number of inserted nodes.

\noindent The final example translates the formula into a concrete localization-cost comparison.
\begin{example}\label{ex-localization-cost}
For $p=5$ and $q=11$, the original graph $\Gamma(\mathbb Z_{55})$ has metric dimension $5+11-4=12$. A matching subdivision of size $3$ has metric dimension $9$. Hence three inserted reference points reduce the number of landmarks by three. The barycentric subdivision also has metric dimension $9$, but it inserts $(5-1)(11-1)=40$ new vertices. Thus, in this example, matching subdivision achieves the same metric dimension using only $3$ inserted vertices instead of $40$.
\end{example}

\section{Conclusion}\label{sect con}

We have refined the study of metric bases in the barycentric subdivision of the zero-divisor graph $\Gamma(\mathbb Z_{pq})$ by giving a detailed necessary and sufficient condition for subsets of cardinality $q-2$ to be metric bases when $q\geq 2p-1$. The proof shows that such a basis is forced to avoid $S_q$, to select exactly one representative from all but one of the closed neighborhoods $N[p_i]$, and to distribute enough landmarks across the rows $U^j$ of subdivision vertices. For partial subdivisions, we expanded the counting argument based on null, unique, and good partitions. In the matching case, the exact formula $\dim(G_r)=p+q-r-4$ for $0\leq r\leq p-2$ shows that each subdivided matching edge lowers the metric dimension by one. This provides a compact alternative to barycentric subdivision and clarifies why a matching of size $p-2$ is sufficient to obtain the value $q-2$.

\end{document}